\newtheorem{theorem}{Theorem}[section]
\newtheorem{proposition}[theorem]{Proposition}
\newtheorem{corollary}[theorem]{Corollary}
\theoremstyle{definition}
\newtheorem{definition}[theorem]{Definition}
\newtheorem{example}[theorem]{Example}
\newtheorem{remark}[theorem]{Remark}
\newtheorem{problem}[theorem]{Problem}
\theoremstyle{remark}
\numberwithin{equation}{section} 
\begin{document}

\setcounter{page}{1}

\author{Arunkumar  C.S., Sruthymurali}

\address{Arunkumar C.S., Government College Kasaragod, Vidyanagar , Kasaragod 671123, India.}
\email{\textcolor[rgb]{0.00,0.00,0.84}{arunkumarcsmaths9@gmail.com}} 

\address{Sruthymurali, Department of Mathematical Sciences, Kannur University, Kannur 670567, India.}
\email{\textcolor[rgb]{0.00,0.00,0.84}{sruthy92smk@gmail.com, sruthymurali@kannuruniv.ac.in} } 

\keywords{BKW-operators; Noncommutative Korovkin type  theorems; Noncommutative BKW-operators; positive linear maps; completely positive maps}

\subjclass[2020]{41A36,46L05}

\title{Noncommutative BKW-operators} 
\begin{abstract}
    Inspired by the classical Bohman-Korovkin-Wulbert (BKW) operators, we initiate a study of noncommutative BKW-operators. Let $A$ be a unital $C^*$-algebra, and $S$ be a set of generators of $A$. A unital completely positive (UCP)-map $\phi: A\rightarrow B(H)$ is said to be  a \textit{noncommutative BKW-operator} for $S$ with respect to norm or weak operator topology (WOT) or strong operator topology (SOT) if for any sequence of UCP-maps
$\phi_n:A\rightarrow B(H)$, $n=1,2,...,$
$\lim_{n\rightarrow \infty}\phi_n(s)=\phi(s),\forall ~s\in S$ in norm (or  WOT or SOT) $\Rightarrow \lim_{n\rightarrow \infty}\phi_n(a)=\phi(a), \forall ~a\in A$ in norm (or WOT or SOT, respectively).  We identify a connection between noncommutative BKW-operators and the unique CP-extension of UCP-maps. We have discussed several examples and explored different notions of noncommutative BKW-operators and their interconnections. Additionally, we introduce the concept of hyperrigidity with respect to a UCP-map and characterize it along the lines of Arveson. Although independent yet related to noncommutative BKW-operators, we provide a noncommutative version  of operator version of the Korovkin theorem recently proposed by D. Popa.

\end{abstract}
\maketitle

\section{Introduction} 

In 1952, H. Bohman \cite{Boh52} established a significant result by demonstrating the strong convergence of a sequence of special interpolation operators $\{B_{n}:n\in \mathbb{N}\}$ on $C[0,1]$ towards the identity operator I on $C[0,1]$. Bohman's theorem states that the sequence $B_{n}$ converges to $I$ strongly whenever $\{B_{n}(t^m):n\in \mathbb{N}\}$ uniformly converges to the functions $t^m$ for each $m=0,1,2$. In 1953, P.P. Korovkin \cite{Kor60} proved a remarkable result that Bohman's theorem is  still holds even when the interpolation operators are replaced by positive linear operators on $C[0,1]$.  Later,  in 1968, D.E. Wulbert \cite{Wul68} demonstrated the validity of Korovkin's theorem even when the positivity condition on the operators is relaxed. Instead, the convergence criterion is based on the operator norms, especially focusing on sequences where the operator norms converge to one.

Let $X$ denote a compact Hausdorff space, and consider a set $G\subset C(X)$. If for any sequence $\psi_n:C(X)\rightarrow C(X)$ of positive linear maps where $\lim\limits_{n\rightarrow\infty} ||\psi_{n}(g)-g ||=0 $ holds for every $g\in G$, then we also have $\lim\limits_{n\rightarrow\infty} ||\psi_{n}(f)-f ||=0 $ for every $f\in C(X)$, then $G$ is referred to as a Korovkin set in $C(X)$. The Korovkin theorem can be restated as follows:  The set $G=\{1,x,x^2\}$ is a Korovkin set for  $C(X)$. A natural generalization of Korovkin sets in $C(X)$ to Banach spaces was proposed by Berens and Lorentz \cite{BL75} as follows: Let $E$ and $F$ be two Banach spaces. Let $\mathcal{B}$ be a subset of the set of all bounded linear operators from $E$ to $F$, and let $P$ be a fixed operator of the class $\mathcal{B}$. A set $G\subseteq E$ is called a Korovkin set with respect to the pair $(\mathcal{B}, P)$, if for each sequence $\psi_{n}\in \mathcal{B}$ with $\Vert \psi_{n} \Vert \rightarrow 1$, the relation $\psi_{n}(x)\rightarrow Px$, $x\in G$ (in norm) implies $\psi_{n}(x)\rightarrow Px$, $x\in E$. In this case the operator $P$ is called Bohman-Korovkin-Wulbert operator(in short BKW-operator) for the set $G$ \cite{Tak90}. One can easily see that a set $G$ qualifies as a Korovkin set for $(\mathcal{B}, P)$ (or alternatively, $P$ serves as a BKW-operator for $G$) if and only if the subspace $S$ generated by $G$ is also a Korovkin set for $(\mathcal{B}, P)$ (or $P$ is a BKW-operator for $S$). Furthermore, it is worth mentioning that any dense subset $G$ of $E$ trivially qualifies as a Korovkin set for $(\mathcal{B}, P)$, implying that all elements of $\mathcal{B}$ are BKW-operators for any dense subset $G$ of $E$.  Note that, in the case of the usual Korovkin set, $\mathcal{B}$ is the set of all positive operators and $P=I$. Despite generalizing the concept of Korovkin sets in $C(X)$ to arbitrary Banach spaces, Berens and Lorentz focused their study on the case where $E=F=C(X)$ and $P=I$.

The generalized definition of Korovkin sets inherently gives rise to two fundamental questions. The first question is focused on identifying conditions under which a nontrivial Korovkin subset exists for a given $P\in \mathcal{B}$. The objective is to determine some or all of these conditions. The second question pertains to a given subspace $S$ of $E$, aiming to determine some or all of the  operators  $P\in \mathcal{B}$ (if they exist) for which $S$ serves as a Korovkin set for $(\mathcal{B}, P)$. Some aspects of the first question were discussed in \cite[Section 3.3 and 3.4]{AC94} and in \cite[Section 5]{Alt10}. 
Regarding the second question, several results are available in the literature. BKW-operators have been studied on $C[0,1]$ \cite{IT97}, and BKW-operators on the disk algebra for the test functions $\{1, z\}$ have been determined \cite{ITW96b}. Weighted composition operators that qualify as BKW-operators have been characterized \cite{ITW96a}, and BKW-operators for the Chebyshev system have been investigated \cite{II99}. S.-E. Takahasi, in a series of papers \cite{Tak90,Tak93,Tak95,Tak96}, explored BKW-operators in various settings, including $C[0,1]$ for the test functions $\{1, x, x^2, x^3, x^4\}$, normed spaces, characterizations in terms of Korovkin closures, and mappings from function spaces to commutative $C^*$-algebras. See also \cite[Section 10]{Alt10}. 

The noncommutative approximation  theory introduced by Arveson \cite{Arv11} in the realm of operator systems within $C^*$-algebras have experienced significant advancements in recent times. 
The main purpose of this article is to  investigate the noncommutative analogues of generalised Korovkin sets  and BKW-operators in the setting of $C^*$-algebras and unital completely positive maps. Let $A$ and $B$ be unital $C^*$-algebras. A linear map $\phi:A\rightarrow B$ induces a linear map $\phi^{(n)}:M_{n}(A)\rightarrow M_{n}(B)$, via
$$\phi^{(n)}([a_{ij}]_{n\times n})=[\phi(a_{ij})]_{n\times n}.$$ A linear map $\phi:A\rightarrow B$ is called positive if $\phi(a)\geq 0$ in $B$ whenever $a\geq 0$ in $A$. The map $\phi$ is called completely positive(CP) if $\phi^{(n)}$ is positive for all $n\in \mathbb{N}$. A CP-map $\phi$ is called unital completely positive(UCP) if $\phi(1)=1$.  A subspace within a $C^*$-algebra that is both self-adjoint and possesses a unit element is referred to as an operator system. UCP-maps on operator systems are defined in a manner similar to the definition of UCP-maps on $C^*$-algebras. Let $B(H)$ denote the $C^*$-algebra of all bounded operators on a Hilbert space $H$.  Let us denote $UCP(S,B(H))$ by the collection of all UCP-map from $S$ to $B(H)$. The set $UCP(S,B(H))$ is compact with respect to the BW-topology, where  a net $\phi_{\lambda}$ converges to $\phi$ in the BW-topology if $\phi_{\lambda}(a)\rightarrow \phi(a)$ in the weak operator topology of $B(H)$. We refer to \cite{Pau02} for a comprehensive and well-explained theory of operator systems and CP-maps.

 Let $S$ be an operator system and let $C^*(S)$ be the $C^*$-algebra generated by $S$. In Section \ref{Noncommutative BKW-operators}, we show a completely positive map $\phi:C^*(S)\rightarrow B(H)$ is a noncommutative BKW-operator for the operator system $S$ if and only if $\phi |_{S}$ has unique CP-extension to the $C^*$-algebra $C^*(S)$. Also, the existence of noncommutative BKW-operator  for any given operator system is addressed. Additionally, we have discussed several examples and explored different notions of noncommutative BKW-operators and their interconnections. In Section \ref{hyperrigidity}, we introduce the concept of hyperrigidity with respect to a UCP-map and characterize it along the lines of Arveson. In the final section, although independent yet related to noncommutative BKW-operators, we provide a noncommutative version  of operator version of the Korovkin theorem recently proposed by D. Popa.\\
 
We organize the main results of this paper as follows. The first result provides a necessary and sufficient condition for a UCP-map to be a noncommutative BKW-operator.

\bigskip
\noindent\textbf{Theorem A:} (See  \Cref{BKW-operators and unique CP-extensions})
Let $S$ be an operator system in a $C^*$-algebra $A=C^*(S)$. Then for a UCP-map $\phi: A \rightarrow B(H)$, the following conditions are equivalent:
\begin{enumerate}[(i)]   
    \item $\phi$ is a noncommutative BKW-operator for $S$ with respect to WOT. 
    \item $\phi|_{S}$ has unique CP-extension to $A$. 
\end{enumerate}
\bigskip
Motivated by the well known notion of hyperrigidity introduced by Arveson in \cite{Arv11}, we introduce and study hyperrigidity with respect to a UCP-map which generalizes Arveson's result.  Regarding Arverson-type results, it is worth mentioning the work of Limaye and Namboodiri \cite{LN84b}, and later works of Namboodiri and et al(see \cite{MNN11,MNN12,NPSV18} and
references therein). Furthermore, a fruitful connection between the Korovkin theory and fast
methods for large linear systems can be found in \cite{FFHMS19,Ser99,KNS13}. See Item B) at page 1076 in \cite{FFHMS19} and the
second part of Theorem 3.6 in \cite{Ser99}, while noncommutative Korovkin theorems can be found in \cite{KNS13}.

We provide a characterization of hyperrigidity with respect to a UCP as follows:\\

\noindent\textbf{Theorem B:} (See Theorem \ref{characterisation})
Let $S$ be a separable operator system that generate  a $C^*$-algebra $A$. Let $\eta: A\rightarrow A$ be a UCP-map. Then the following statements are equivalent: 
 \begin{enumerate}
     \item[(i)] $S$ is  $\eta$-hyperrigid in $A$. 
     \item[(ii)] For every non-degenerate representation $\pi: A \rightarrow B(H)$ of $A$ on  a separable Hilbert space $H$  and for any sequence of UCP-maps $\phi_{n}: A \rightarrow B(H)$ such that  $$\lim\limits_{n\rightarrow \infty}\Vert \phi_{n}(s)-\pi(\eta(s))\Vert =0 ~\forall s\in S \implies $$ 
     $$ \lim\limits_{n\rightarrow \infty}\Vert \phi_{n}(a)-\pi(\eta(a))\Vert =0~\forall a\in A.$$
     \item[(iii)] For every non-degenerate representation $\pi: A \rightarrow B(H)$ of $A$ on a separable Hilbert space $H$, the UCP-map $\pi\circ \eta _{|S}$ has unique CP-extension to $A$, namely $\pi \circ \eta $  itself.
    
     \item[(iv)] For any $C^*$-algebra $B$, and for any  unital $*$-homomorphism $\rho: A \rightarrow B$ and UCP-map $\phi: B \rightarrow B$ such that $\phi(\rho(s))=\rho(\eta(s)) ~\forall s \in S$ then $\phi(\rho(a))=\rho(\eta(a)) ~\forall a \in A$.
 \end{enumerate}
\bigskip
In the next result we provide a noncommutative version of noncommutative analogue of the operator version of Korovkin’s theorem, recently established by Popa\cite[Theorem 1]{Pop22}. We prove the result for more general maps—namely, Schwarz maps and 2-positive maps and deduce the corresponding result for UCP maps as a corollary. Let $A$ and $B$ be unital $C^*$-algebras. Fix an element $a \in A$, and denote by
$A_0 = C^*(a)$, the $C^*$-subalgebra of A generated by a.  \\

\noindent\textbf{Theorem C:} (See Theorem \ref{noncommutative Popa's theorem} and Corollary \ref{noncommutative  theorem})\\
\noindent
$\bullet~~~$ Let  $\phi_{n}: A_{0}\rightarrow B$ be a sequence of Schwarz maps and $\phi: A\rightarrow B$ be a 2-positive linear map  such that $\phi(1)$ is invertible,   $\phi(a^*a)=\phi(a^*)\phi(1)^{-1}\phi(a)$ and  $\phi(aa^*)=\phi(a)\phi(1)^{-1}\phi(a^*)$. If $\lim\limits_{n\rightarrow \infty} \phi_{n}(1)=\phi(1)$,  $\lim\limits_{n\rightarrow \infty} \phi_{n}(a)=\phi(a)$,
$\lim\limits_{n\rightarrow \infty} \phi_{n}(a^*a)=\phi(a^*a)$, $\lim\limits_{n\rightarrow \infty} \phi_{n}(aa^*)=\phi(aa^*)$ in norm, then for every $x\in C^*(a)$, $\lim\limits_{n\rightarrow \infty} \phi_{n}(x)=\phi(x)$ in norm. \\\\
$\bullet$ Let  $\phi,\phi_{n}: A_{0}\rightarrow B$ be a sequence of CP-maps  such that $\phi(1)$ is invertible,   $\phi(a^*a)=\phi(a^*)\phi(1)^{-1}\phi(a)$ and  $\phi(aa^*)=\phi(a)\phi(1)^{-1}\phi(a^*)$. If $\lim\limits_{n\rightarrow \infty} \phi_{n}(1)=\phi(1)$,  $\lim\limits_{n\rightarrow \infty} \phi_{n}(a)=\phi(a)$,
$\lim\limits_{n\rightarrow \infty} \phi_{n}(a^*a)=\phi(a^*a)$, $\lim\limits_{n\rightarrow \infty} \phi_{n}(aa^*)=\phi(aa^*)$ in norm, then for every $x\in C^*(a)$, $\lim\limits_{n\rightarrow \infty} \phi_{n}(x)=\phi(x)$ in norm.

\section{Noncommutative BKW-operators}\label{Noncommutative BKW-operators}

The main goal of this section is to introduce the concept of noncommutative BKW-operators, a necessary and sufficient condition for a UCP map to be a noncommutative BKW-operator and provide several illustrative examples.

\begin{definition}\label{noncommutative BKW-operators}
Let $A$ be a $C^*$-algebra  and $S$ be a set of generators of $A$. A UCP-map $\phi: A\rightarrow B(H)$ is said to be  a \textit{noncommutative BKW-operator} for $S$ with respect to norm topology if for any sequence of UCP maps 
$\phi_n:A\rightarrow B(H)$, $n=1,2,...,$
$$\lim_{n\rightarrow \infty}||\phi_n(s)-\phi(s)||=0,\forall ~s\in S\Rightarrow \lim_{n\rightarrow \infty}||\phi_n(a)-\phi(a)||=0, \forall ~a\in A.$$ 
\end{definition} 

In the context of UCP-maps, the norm of $\phi$ is equal to 1, which implies that the condition $\Vert \phi_{n}\Vert \rightarrow 1$ is satisfied trivially. If we consider the set $\mathcal{B}$ to consist of all UCP-maps from $A$ to $B(H)$ in the classical definition of BKW-operators mentioned in the introduction, we obtain the definition of noncommutative BKW-operators.

\begin{remark}
Let $\phi:A\rightarrow B(H)$ be a noncommutative BKW-operator for $S\subseteq A$. Then $U^*\phi U$ is also a noncommutative BKW-operator for $S$, where $U$ is any unitary operator on $H$. This is an immediate consequence of the fact that the operator norm is unitarily invariant.
\end{remark}
\begin{example}
   Consider $A=C[0,1]$ and $S=\{1,t,t^2 \}\subseteq A$. For $f\in A$, let $M_{f}$ be the multiplication operator on $L^2[0,1]$. Consider the completely positive map $\phi : A\rightarrow B(L^2[0,1])$, defined as $\phi(f)=M_{f}$. Then $\phi$ is a noncommutative BKW-operator for $S$ is a consequence of the facts that $\phi$ is a $*$-homomorphism and completely positive maps are Schwarz maps. 
   This specific scenario is a particular instance of the following more general example. Let $A$ be any $C^*$-algebra and element $a\in A$. Consider $S=\{1,a,a^*a,aa^* \}$. Then any representation $\pi:A\rightarrow B(H)$ is a noncommutative BKW-operator for $S$, see \cite[Corollary 1.2]{LN84} for more details.
\end{example}

\begin{example}
According to \cite[Theorem 3.2]{Arv11}, if we have a self-adjoint operator $T\in B(H)$ and $A$ represents the $C^*$-algebra generated by $T$, then the inclusion map $\phi$ from $A$ to $B(H)$ serves as a noncommutative BKW-operator for the set $S=\{1,T,T^2\}$. However,  if the spectrum of $T$ contains at least three distinct points, then the map $\phi$ does not qualify as a noncommutative BKW-operator for the set $\{1,T \}$. \color{black}
\end{example}

Next we provide a non example of a noncommutative BKW-operator.

\begin{example}
Consider a separable infinite-dimensional Hilbert space $H$ and the unilateral right shift operator $V$ on $H$. Let $\phi: C^*(V)\rightarrow B(H)$ be defined as $$\phi(X) = V^*XV \text{ for all }X\in C^*(V).$$ Let $S$ be the operator  system generated by $V$, that is, $$S=\{\lambda_{1}I+\lambda_{2}V+\lambda_{3}V^*: \lambda_{1}, \lambda_{2}, \lambda_{3}\in \mathbb{C}\}.$$
Define $\phi_{n}:C^*(V)\rightarrow B(H)$ by $\phi_{n}=Id_{C^*(V)}$ $\forall~ n\in \mathbb{N}$. Then, for all elements 
$ \lambda_{1}I+\lambda_{2}V+\lambda_{3}V^*\in S$,
\begin{align*}
\lim_{n\rightarrow \infty}||\phi_n(\lambda_{1}I+\lambda_{2}V+\lambda_{3}V^*)-\phi(\lambda_{1}I+\lambda_{2}V+\lambda_{3}V^*)|| &= \\
||\lambda_{1}I+\lambda_{2}V+\lambda_{3}V^*-V^*(\lambda_{1}I+\lambda_{2}V+\lambda_{3}V^*)V||=0. 
\end{align*}
But, for $VV^*\in C^*(V)$, 
$$\lim_{n\rightarrow \infty}||\phi_n(VV^*)-\phi(VV^*)||= 
||VV^*-I||\neq 0. $$ 
Thus, the map $\phi$ is not a noncommutative BKW-operator for $S$.
\end{example}

It is clear that the Definition \ref{noncommutative BKW-operators} of a noncommutative BKW-operator make use of the norm convergence in $B(H)$. However, an alternative definition of noncommutative BKW-operator can be provided where the norm convergence is replaced with weak convergence, specifically the BW-convergence of UCP-maps.

\begin{definition}\label{weak noncommutative BKW-operators}
Let $A$ be a $C^*$-algebra  and $S$ be a set of generators of $A$. A UCP-map $\phi: A\rightarrow B(H)$ is said to be  a \textit{noncommutative BKW-operator} for $S$ with respect to WOT if for any sequence of UCP maps 
$\phi_n:A\rightarrow B(H)$, $n=1,2,...,$ and $\forall ~x,y\in H$,
$$\lim_{n\rightarrow \infty}\langle (\phi_n(s)-\phi(s))x,y \rangle=0,\forall ~s\in S\Rightarrow \lim_{n\rightarrow \infty}\langle (\phi_n(a)-\phi(a))x,y \rangle=0, \forall ~a\in A.$$ 
\end{definition} 

 Next, we provide a desirable property of CP-map that qualifies it as a noncommutative BKW-operator in the sense of Definition \ref{weak noncommutative BKW-operators}. 

\begin{theorem}\label{BKW-operators and unique CP-extensions}
Let $S$ be a separable operator system and $A=C^*(S)$. Then for a  UCP-map $\phi: A \rightarrow B(H)$, where $H$ is a separable Hilbert space, the following conditions are equivalent:
\begin{enumerate}[(i)]   
    \item $\phi$ is a noncommutative BKW-operator for $S$ with respect to WOT. 
    \item $\phi|_{S}$ has unique CP-extension to $A$. 
\end{enumerate}
\end{theorem}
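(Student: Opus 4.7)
The plan is to prove the two implications separately, with the easy direction $(i) \Rightarrow (ii)$ done by a constant-sequence trick, and the harder direction $(ii) \Rightarrow (i)$ done by exploiting BW-compactness of the UCP-map space together with sequential extraction.

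For $(i) \Rightarrow (ii)$, suppose $\phi$ is a noncommutative BKW-operator for $S$ with respect to WOT, and let $\psi : A \to B(H)$ be any CP-extension of $\phi|_S$. Since $1 \in S$ and $\psi(1) = \phi|_S(1) = 1$, the map $\psi$ is automatically UCP, so we may form the constant sequence $\phi_n := \psi$. For every $s \in S$ we have $\phi_n(s) - \phi(s) = \psi(s) - \phi(s) = 0$, so WOT-convergence on $S$ holds trivially. By hypothesis, $\phi_n(a) \to \phi(a)$ in WOT for every $a \in A$, which forces $\psi(a) = \phi(a)$ on all of $A$. Hence the CP-extension is unique.

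For $(ii) \Rightarrow (i)$, assume $\phi|_S$ has a unique CP-extension, namely $\phi$, and let $\phi_n : A \to B(H)$ be UCP-maps with $\phi_n(s) \to \phi(s)$ in WOT for every $s \in S$. Because $A$ is separable (being generated by the separable operator system $S$) and $H$ is separable, the convex set $\mathrm{UCP}(A,B(H))$ is compact and metrizable in the BW-topology, and in particular sequentially compact. I would argue by contradiction: if $\phi_n \not\to \phi$ in WOT pointwise on $A$, there exist $a \in A$, vectors $x, y \in H$, a scalar $\varepsilon > 0$, and a subsequence $\{\phi_{n_k}\}$ with $|\langle(\phi_{n_k}(a) - \phi(a))x, y\rangle| \geq \varepsilon$ for all $k$. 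By sequential BW-compactness, a further subsequence of $\{\phi_{n_k}\}$ BW-converges to some UCP-map $\psi : A \to B(H)$. On $S$, this cluster map satisfies $\psi(s) = \lim \phi_{n_{k_j}}(s) = \phi(s)$ in WOT, so $\psi|_S = \phi|_S$. By the uniqueness of the CP-extension, $\psi = \phi$ on $A$, contradicting the choice of $a,x,y$.

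The main obstacle is ensuring the BW-limit argument in the second implication is rigorous: one needs metrizability of the BW-topology on $\mathrm{UCP}(A,B(H))$ to convert compactness into sequential compactness, which is exactly why the separability hypotheses on both $S$ and $H$ are imposed in the statement. A secondary subtlety is that any CP-extension of a UCP map defined on an operator system containing the unit is itself unital, which is what lets the constant-sequence argument in $(i) \Rightarrow (ii)$ go through without an extra assumption on $\psi$.
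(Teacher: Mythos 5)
Your proposal is correct and follows essentially the same route as the paper: the constant-sequence trick for $(i)\Rightarrow(ii)$, and BW-compactness plus metrizability of $\mathrm{UCP}(A,B(H))$ to extract convergent subsequences whose limits are pinned down by the uniqueness hypothesis for $(ii)\Rightarrow(i)$ (the paper phrases this via the sub-subsequence criterion rather than by contradiction, but the two are equivalent). Your explicit remark that a CP-extension of $\phi|_S$ is automatically unital because $1\in S$ is a small but worthwhile clarification that the paper leaves implicit.
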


\begin{proof}
$(i)\implies (ii)$: Let $\psi : A\rightarrow B(H)$ be a UCP extension of  $\phi|_{S}$. Take $\phi_{n}=\psi$ for all $n\in \mathbb{N}$. Then $\phi_{n}(s)$  converges to $\phi(s)$ in the weak operator topology for every $s\in S$. By assumption $(i)$ we have  $\phi_{n}(a)$  converges to $\phi(a)$ in the weak operator topology for every $a\in A$. Thus $\psi(a)=\phi(a)$ for every $a\in A$. \\
$(ii)\implies (i)$: Let $\phi_{n}:A\rightarrow B(H)$ be a sequence of UCP maps such that 
$$\lim \limits_{n\rightarrow \infty}\langle ( \phi_{n}(s)-\phi(s)) x,y \rangle =0 ~~\forall ~ s\in S,~ x,y \in H.$$
We have to show that $$\lim \limits_{n\rightarrow \infty}\langle ( \phi_{n}(a)-\phi(a)) x,y \rangle =0 ~~\forall ~ a\in A,~ x,y \in H.$$
In other words, we need to prove that  the sequence  $\{\phi_{n}\}$ converges to $\phi$ in the BW-topology of $UCP(A,B(H))$. Since $UCP(A, B(H))$ with  BW-topology  is metrizable, it is enough to show every subsequence $\{\phi_{n_{k}}\}$ of  
$\{\phi_{n}\}$ contains a convergent subsequence. Let $\{\phi_{n_{k}}\}$ be a subsequence of   $\{\phi_{n}\}$. Since $UCP(A, B(H))$ is compact in the BW-topology  there is a subsequence $\{\phi_{n_{k,l}}\}$ of $\{\phi_{n_{k}}\}$ that converges in the BW-topology, say to $\psi\in UCP(A, B(H))$. As for $s\in S$, $\phi_{n}(s)\rightarrow \phi(s)$ in the weak operator topology we must have  $\psi(s)=\phi(s)$ for all $s\in S$. By assumption $(ii)$, it follows that $\psi =\phi$ on $A$. 
\end{proof}

The \Cref{BKW-operators and unique CP-extensions}  can be employed to generate examples for noncommutative BKW-operators as illustrated below.

\begin{example}
Let $\{E_{ij}:i,j=1,2\}$ denotes the standard matrix units in $M_2(\mathbb{C})$. Let $T=\begin{bmatrix} 1 & i\\ -i & 1 \end{bmatrix}$ and let  $S$ be the operator system in $M_{2}(\mathbb{C})$ generated by $\{I, E_{11}, T\}$.  Consider the UCP-map $\phi:M_{2}(\mathbb{C})\rightarrow M_{2}(\mathbb{C})$ defined by 
$$\phi \left(\begin{bmatrix} a & b\\ c& d \end{bmatrix} \right ) = \begin{bmatrix} a& 0\\ 0& d \end{bmatrix}.$$ 
Let  $\psi: M_{2}(\mathbb{C})\rightarrow M_{2}(\mathbb{C})$ be a UCP such that $\phi |_{S}=\psi |_{S}$.  Then $\psi(E_{11})=\phi(E_{11})=E_{11}$ and 
  $\psi(E_{22})=\psi(I-E_{11})=I-E_{11}=E_{22}$.
Now, we will show that $\psi(E_{12})$ is self adjoint.  Since $\psi(E_{12})^*=\psi(E_{12}^*)=\psi(E_{21})$, its enough to show that $\psi(E_{12})=\psi(E_{21})$.
Note that 
$$\psi\left (\begin{bmatrix} 0 & i\\ -i& 0 \end{bmatrix} \right )=\psi(T-I)=\phi(T-I) =0.$$
Thus $\psi(iE_{12}-iE_{21})=0$ and hence $\psi(E_{12})=\psi(E_{21})$.

Let $\psi(E_{12})$ is the form $\begin{bmatrix} a & b\\ \bar{b}& d \end{bmatrix}$ with $a,d\in \mathbb{R}$. We show that $b=d=0$.
\begin{align*}
    \begin{bmatrix} 1 & 0\\ 0& 0 \end{bmatrix} &=\psi(E_{11})=\psi(E_{12}E_{21}) \\
                                              &\geq \psi(E_{12})\psi(E_{21}) =\begin{bmatrix} a & b\\ \overline{b}& d \end{bmatrix}
                                              \begin{bmatrix} a & b\\ \overline{b}& d \end{bmatrix} \\
                                              &= \begin{bmatrix} a^2 + |b|^2 & ab+bd\\ a\overline{b}+\overline{b}d&  |b|^2+d^2 \end{bmatrix} \\
                                              & \geq 0.
\end{align*}
Form the above inequality, we can conclude that $|a|\leq1$ and $b=d=0$.
Thus $\psi(E_{12})=\begin{bmatrix} a & 0\\ 0& 0 \end{bmatrix}$ with $|a|\leq 1$.
Now, 
\begin{align*}
    \begin{bmatrix} 0 & 0\\ 0& 1 \end{bmatrix} &=\psi(E_{22})=\psi(E_{21}E_{12}) \\
                                              &\geq \psi(E_{21})\psi(E_{11}) =\begin{bmatrix} a & 0\\ 0& 0 \end{bmatrix}
                                              \begin{bmatrix} a & 0\\ 0& 0 \end{bmatrix} \\
                                              &= \begin{bmatrix} a^2 & 0\\ 0& 0 \end{bmatrix}\geq 0. 
\end{align*}
Thus $a=0$. Hence $\psi(E_{12})=0=\phi(E_{12})$. Also, we have $\psi(E_{21})=\psi(E_{12})=0=\phi(E_{12})$.
Hence $\psi=\phi$ on $M_{2}(\mathbb{C})$. That is, $\phi_{|S}$ has unique CP-extension to $M_{2}(\mathbb{C})$. Hence $\phi$ is a noncommutative BKW-operator for the operator system $S$ in the sense of Definition \ref{noncommutative BKW-operators} and Definition \ref{weak noncommutative BKW-operators}.
\end{example}

\color{black}
\begin{example}
Let $K(H)$ be the $C^*$-algebra of compact operators on a separable Hilbert space $H$, and let $T(H)$ denote the trace class operators on $H$. Let $V$ be the right shift operator on $H$. Consider the UCP-map $\phi: B(H)\rightarrow B(H)$ defined by $$\phi(x)=V^*xV.$$ Then $\phi$ is unital as $V$ is an isometry.  Consider the operator system $\widetilde{T(H)}=T(H)+\mathbb{C}I$ and the generated $C^*$-algebra $\widetilde{K(H)}=K(H)+\mathbb{C}I.$ 
Let $\widetilde{\phi}=\phi|_{\widetilde{K(H)}}$, the restriction of $\phi$ to $\widetilde{K(H)}$. We claim that $\widetilde{\phi}$ is a noncommutative BKW-operator for the operator system $\widetilde{T(H)}$, by showing that $\widetilde{\phi}|_{\widetilde{T(H)}}$ has unique CP-extension namely $\widetilde{\phi}$ itself. Let $\tilde{\psi}:\widetilde{K(H)}\rightarrow B(H)$ be a UCP-map such that $\tilde{\psi} |_{(\widetilde{T(H)}}=\tilde{\phi} |_{\widetilde{T(H)}}$. 
In particular, 
$$\tilde{\psi} |_{(T(H)}=\tilde{\phi} |_{T(H)}.$$
For $h\in H$, the rank one operator $h\otimes h \in T(H)$. Thus $\tilde{\psi}(h\otimes h)=\tilde{\phi}(h\otimes h)$. 
 \begin{align*}
     \tilde{\psi}(h\otimes h )=& \tilde{\phi}(h\otimes h)\\
    =& V^*(h\otimes h)V \\
    =& (V^*h)\otimes (V^*h).
 \end{align*}
Therefore $\tilde{\psi}(h\otimes h)$ and $\tilde{\phi}(h\otimes h)$ are both rank one operators and hence they are compact. Also, by the positivity of $\phi$ and $\psi$, and rank one  operator $h\otimes h$ we have $\tilde{\psi}(h\otimes h)=\tilde{\phi}(h\otimes h)\geq 0$. That is, $\tilde{\psi}(h\otimes h)\in K(H)^+$  and $\tilde{\phi}(h\otimes h)\in K(H)^+$ for every $h\in H$.
Applying the result \cite[Theorem 2.3]{XY16} to $\tilde{\psi}|_{K(H)}$ and $\tilde{\phi}|_{K(H)}$ we have $Range(\tilde{\psi}|_{K(H)})\subseteq K(H)$ and $Range(\tilde{\phi}|_{K(H)})\subseteq K(H)$. i.e. 
$$\tilde{\psi}|_{K(H)}:K(H)\rightarrow K(H) \text{ and }$$
$$\tilde{\phi}|_{K(H)}:K(H)\rightarrow K(H).$$
Since $\tilde{\psi} |_{(T(H)}=\tilde{\phi} |_{T(H)}$,
by \cite[Lemma 3.2(b)]{XY16} it follows that $\tilde{\psi} =\tilde{\phi}$ on {K(H)}.
Hence $\tilde{\psi}=\tilde{\phi}$ on $\widetilde{K(H)}$.
Hence $\phi$ is a noncommutative BKW-operator for the operator system $\widetilde{T(H)}$.
\end{example} 

The following simple observation can also be utilized for generating additional examples.
\begin{proposition}
Let $\psi: B\rightarrow B(H) $ be a UCP map and let $\phi: A\rightarrow B$ be a surjective  UCP map such that $B=C^*(\phi(S))$. If $\psi \circ \phi | _{S}$ has unique CP-extension to $A$, then $\psi |_{\phi(S)}$ has unique CP-extension to $B$. 
\end{proposition}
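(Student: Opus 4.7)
The plan is a pullback-style argument: given an arbitrary CP extension of $\psi|_{\phi(S)}$ to $B$, I will compose with $\phi$ to manufacture a CP extension of $\psi\circ\phi|_{S}$ to $A$, invoke the uniqueness hypothesis to identify that composite with $\psi\circ\phi$ itself, and then cancel $\phi$ on the right using surjectivity to recover $\psi$.

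Concretely, suppose $\tilde{\psi}\colon B\to B(H)$ is a CP map with $\tilde{\psi}|_{\phi(S)}=\psi|_{\phi(S)}$. First I would form $\tilde{\psi}\circ\phi\colon A\to B(H)$, which is CP as a composition of CP maps, and observe that for each $s\in S$ one has $(\tilde{\psi}\circ\phi)(s)=\tilde{\psi}(\phi(s))=\psi(\phi(s))=(\psi\circ\phi)(s)$, so $\tilde{\psi}\circ\phi$ is a CP extension of $\psi\circ\phi|_{S}$ to $A$. By hypothesis there is only one such CP extension and it must be $\psi\circ\phi$, so $\tilde{\psi}\circ\phi=\psi\circ\phi$ on all of $A$. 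For any $b\in B$, surjectivity of $\phi$ produces $a\in A$ with $\phi(a)=b$, whence $\tilde{\psi}(b)=\tilde{\psi}(\phi(a))=\psi(\phi(a))=\psi(b)$, so $\tilde{\psi}=\psi$ on $B$, as required.

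The argument is purely formal and I do not foresee a real obstacle. The one step that genuinely leans on a hypothesis beyond the formalities is the final cancellation, which requires $\phi$ to be surjective; without surjectivity one could conclude only $\tilde{\psi}=\psi$ on the range of $\phi$. The generating condition $B=C^{*}(\phi(S))$ is not explicitly invoked in the proof, but it is the natural hypothesis fixing the operator-system framework in which $\psi|_{\phi(S)}$ is to be CP-extended, and no appeal to \Cref{BKW-operators and unique CP-extensions} or to any BW-compactness machinery is needed.
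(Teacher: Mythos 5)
Your proof is correct and is essentially identical to the paper's own argument: both compose a putative CP extension $\tilde{\psi}$ with $\phi$ to obtain a CP extension of $\psi\circ\phi|_{S}$, invoke the uniqueness hypothesis on $A$, and then cancel $\phi$ using surjectivity. Your write-up is, if anything, slightly more explicit than the paper in noting that $\tilde{\psi}\circ\phi$ is CP and in flagging where surjectivity is genuinely needed.
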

\begin{proof}
Assume that $\psi \circ \phi | _{S}$ has unique CP-extension to $A$. Let $\tilde{\psi}$ be a CP-extension of $\psi |_{\phi(S)}$ to $B$. Then
$$\tilde{\psi}|_{\phi(S)}=\psi |_{\phi(S)} \implies  \tilde{\psi}(\phi(s))=\psi(\phi(s))~\forall s\in S.$$ But the map $\psi \circ \phi | _{S}$ has unique CP-extension to $A$ gives that  $$\tilde{\psi}(\phi (a))=\psi(\phi(a))~\forall a\in A.$$ 
Since the map $\phi$ is surjective, it follows that $\tilde{\psi}(b)=\psi(b)~\forall ~b\in B.$
\end{proof}

We remark that, in general a UCP-map on an operator systems can have more than one UCP-extensions to the generated $C^*$-algebra which we illustrate with an example.

\begin{example}
Let $S$ be the operator system in $M_{2}(\mathbb{C})$ given by $$ S=\left \{ \begin{bmatrix} a & b\\ c& a \end{bmatrix} : a,b,c \in \mathbb{C} \right \}.$$
Consider the UCP-map $\phi:M_{2}(\mathbb{C})\rightarrow M_{2}(\mathbb{C})$ defined by 
$$\phi \left(\begin{bmatrix} a & b\\ c& d \end{bmatrix} \right ) = \begin{bmatrix} a& 0\\ 0& d \end{bmatrix}.$$ 
Then the map $\psi: M_{2}(\mathbb{C})\rightarrow M_{2}(\mathbb{C})$ given by 
$$\psi \left(\begin{bmatrix} a & b\\ c& d \end{bmatrix} \right ) = \begin{bmatrix} d& 0\\ 0& a \end{bmatrix}$$ is  a UCP extension of  $\phi |_{S}$ but $\phi \neq \psi$. Thus  $\phi |_{S}$ has more than one CP-extension to $C^*(S)=M_{2}(\mathbb{C})$.
\end{example}

In light of \Cref{BKW-operators and unique CP-extensions}, our investigation of noncommutative BKW-operators in the sense of Definition \ref{weak noncommutative BKW-operators} leads us to examine UCP-maps on operator systems  with unique  CP-extension to the generated C*-algebras. Based on the well-known  Arveson extension theorem, which states that: suppose  $\psi: S\rightarrow B(H)$ is a CP-map then there exists a CP-map $\tilde{\psi}: A\rightarrow B(H)$ such that $\tilde{\psi}_{|S}=\psi$, we can assert the following statement. The investigation of noncommutative BKW-operators can be examined by the uniqueness of the Arveson extension of UCP-maps on operator systems.

The representations of $C^*$-algebras form one of the simplest classes of completely positive maps. 
Arveson's seminal papers\cite{Arv69,Arv72} focused on the study of irreducible representations that possess the unique extension property. Let $S$ be an operator system, and let $A=C^*(S)$. A representation $\pi:A\rightarrow B(H)$ is said to have unique extension property for  $S$ if $\pi$ is the only UCP-extension of $\pi_{|S}$ from $S$ to $A$. An irreducible representation of $A$ with unique extension property for $S$ is called a boundary representation for the operator systems $S$. Thus by \Cref{BKW-operators and unique CP-extensions}, each boundary representation for $S$ is a noncommutative BKW-operator for $S$. The existence of boundary representations, which was proven by Arveson for separable operator systems \cite{Arv08}, and for general cases by Davidson and Kennedy \cite{DK15}, ensures the existence of noncommutative BKW-operators in the sense of Definition \ref{weak noncommutative BKW-operators}. 

\begin{theorem}\label{existence of BKW-operator}
    There exists a noncommutative BKW-operator for any  operator system $S$ in a $C^*$-algebra $A$.
    \end{theorem}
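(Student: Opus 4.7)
The plan is to exploit the equivalence already established in \Cref{BKW-operators and unique CP-extensions} together with the existence theory for boundary representations, which is summarized in the paragraph immediately preceding the statement. Specifically, I would produce the desired noncommutative BKW-operator by exhibiting a single UCP-map whose restriction to $S$ admits a unique CP-extension, and the natural candidate is a boundary representation for $S$.

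First I would recall that if $\pi: A \to B(H)$ is a boundary representation for the operator system $S \subseteq A = C^*(S)$, then $\pi$ is an irreducible $*$-representation, and in particular a UCP-map, with the defining property that $\pi$ is the unique UCP-extension (equivalently, the unique CP-extension, since any CP-extension of a unital map on an operator system is automatically unital) of $\pi|_{S}$ from $S$ to $A$. Thus $\pi|_{S}$ has a unique CP-extension to $A$ by definition.

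Next I would invoke the existence results: Arveson \cite{Arv08} proved that boundary representations exist and separate points for every separable operator system, and Davidson--Kennedy \cite{DK15} removed the separability hypothesis, establishing the existence of boundary representations for an arbitrary operator system $S$. Applying either of these to our $S \subseteq A$ produces at least one boundary representation $\pi: A \to B(H)$.

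Finally, I would combine the two ingredients by appealing to \Cref{BKW-operators and unique CP-extensions}: since $\pi|_{S}$ has a unique CP-extension to $A$, the UCP-map $\pi$ is a noncommutative BKW-operator for $S$ with respect to WOT. The only subtle point is that \Cref{BKW-operators and unique CP-extensions} as stated assumes separability of $S$ and $H$; in the non-separable setting one either restricts to a separable subsystem and a separable reducing subspace of a boundary representation to invoke the theorem verbatim, or one re-runs the BW-compactness argument of \Cref{BKW-operators and unique CP-extensions} using nets in place of sequences (replacing metrizability by the fact that $UCP(A,B(H))$ is compact Hausdorff in the BW-topology, so every net has a convergent subnet). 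This last technical wrinkle is the only real obstacle; the conceptual content of the proof is entirely contained in the two citations above.
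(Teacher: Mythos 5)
Your proposal is correct and follows exactly the paper's route: the paper's proof is the one-line observation that a boundary representation (which exists by Davidson--Kennedy \cite{DK15}, or by Arveson \cite{Arv08} in the separable case) restricts to a map on $S$ with unique CP-extension and is therefore a noncommutative BKW-operator by \Cref{BKW-operators and unique CP-extensions}. Your additional remark about the separability hypothesis in \Cref{BKW-operators and unique CP-extensions} is a genuine point the paper glosses over, and your proposed fix (running the compactness argument with nets, using that in a compact Hausdorff space a net all of whose convergent subnets share the same limit must converge) is sound.
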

\begin{proof}
    Directly follows from the existence of boundary representation \cite{DK15}.
\end{proof}

In the Definition \ref{weak noncommutative BKW-operators}, we utilized the weak operator topology (WOT) on $B(H)$. It is also feasible to define a similar concept using the strong operator topology (SOT) in $B(H)$ as shown below. 

\begin{definition}\label{strong noncommutative BKW-operators}
Let $A$ be a $C^*$-algebra  and $S$ be a set of generators of $A$. A UCP-map $\phi: A\rightarrow B(H)$ is said to be  a \textit{noncommutative BKW-operator} for $S$ if for any sequence of UCP-maps 
$\phi_n:A\rightarrow B(H)$, $n=1,2,...,$  $\forall~ h\in H$
$$\lim_{n\rightarrow \infty}||\phi_n(s)h-\phi(s)h||=0,\forall ~s\in S\Rightarrow \lim_{n\rightarrow \infty}||\phi_n(a)h-\phi(a)h||=0, \forall ~a\in A.$$ 
\end{definition} 
However, it is crucial to observe that the weak and strong operator topologies on $B(H)$ yield the same notion of noncommutative BKW-operator as shown in the following theorem.
\begin{theorem}
   A UCP-map $\phi$ is a noncommutative BKW-operator in the sense of  Definition  \ref{weak noncommutative BKW-operators} if and only if $\phi$ is a noncommutative BKW-operator in the sense of Definition \ref{strong noncommutative BKW-operators}.
\end{theorem}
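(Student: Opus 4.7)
I plan to route both implications through the intrinsic characterization of \Cref{BKW-operators and unique CP-extensions}, namely that $\phi|_{S}$ admits a unique CP-extension to $A$, by showing that the SOT version of the BKW property is also equivalent to this condition. Granted that, the equivalence of the two notions is immediate.

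For the easy half, SOT BKW $\Rightarrow$ WOT BKW, I would use the same constant-sequence trick as in the proof of \Cref{BKW-operators and unique CP-extensions}: if $\psi:A\to B(H)$ is any UCP-extension of $\phi|_{S}$, then the constant sequence $\phi_{n}\equiv\psi$ trivially satisfies the SOT hypothesis on $S$, so the SOT BKW property forces $\psi=\phi$ on all of $A$; invoking \Cref{BKW-operators and unique CP-extensions} then gives WOT BKW.

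For the converse, WOT BKW $\Rightarrow$ SOT BKW, \Cref{BKW-operators and unique CP-extensions} already yields unique CP-extension. Given UCP maps $\phi_{n}$ with $\phi_{n}(s)h\to\phi(s)h$ in norm for every $s\in S$ and $h\in H$, the SOT hypothesis implies WOT convergence on $S$, so \Cref{BKW-operators and unique CP-extensions} already delivers $\phi_{n}(a)\to\phi(a)$ in WOT for every $a\in A$. To upgrade WOT to SOT on $A$, I plan to expand
\[
\|\phi_{n}(a)h-\phi(a)h\|^{2}=\|\phi_{n}(a)h\|^{2}-2\operatorname{Re}\langle\phi_{n}(a)h,\phi(a)h\rangle+\|\phi(a)h\|^{2},
\]
send the middle term to $-2\|\phi(a)h\|^{2}$ via WOT convergence at $a$, and bound the first term by the Kadison--Schwarz inequality: $\|\phi_{n}(a)h\|^{2}\le\langle\phi_{n}(a^{*}a)h,h\rangle\to\langle\phi(a^{*}a)h,h\rangle$ by WOT convergence at $a^{*}a$.

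The main obstacle is that these estimates leave a Schwarz defect $\langle\phi(a^{*}a)h,h\rangle-\|\phi(a)h\|^{2}$, which vanishes only on the multiplicative domain of $\phi$ and is generally nonzero. Closing this gap in full generality is the crux of the hard direction; I expect it to require exploiting the SOT (rather than merely WOT) hypothesis on $S$ more fully---for instance, observing that when $S$ is an operator system the SOT convergence on $S$ upgrades automatically to SOT${}^{*}$ convergence on $S\cup S^{*}$, so that products of the $\phi_{n}(s)$'s and their adjoints converge in SOT to the corresponding products for $\phi$, and then propagating this multiplicative control from $S$ to $A$ via the unique-extension rigidity of $\phi|_{S}$ to force the Schwarz defect to vanish in the limit.
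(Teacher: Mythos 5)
Your easy half (SOT $\Rightarrow$ WOT, via the constant-sequence trick and \Cref{BKW-operators and unique CP-extensions}) is exactly the paper's argument and is correct. For the hard half you set up the same computation the paper uses: expand $\Vert\phi_n(a)h-\phi(a)h\Vert^2$, bound $\Vert\phi_n(a)h\Vert^2$ by $\langle\phi_n(a^*a)h,h\rangle$ via the Kadison--Schwarz inequality, and pass to the limit using the WOT convergence on all of $A$ that \Cref{BKW-operators and unique CP-extensions} supplies. The difference is that you stop, correctly observing that this upper bound converges to $2\bigl(\langle\phi(a^*a)h,h\rangle-\Vert\phi(a)h\Vert^2\bigr)$, a Schwarz defect that vanishes only when $a$ lies in the multiplicative domain of $\phi$. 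So as written your proof of WOT $\Rightarrow$ SOT is incomplete, and your proposed repair does not obviously help: SOT${}^*$ convergence on $S$ does hold (since $S=S^*$ and UCP maps are $*$-preserving), but nothing in the unique-extension hypothesis forces $\phi(a^*a)=\phi(a)^*\phi(a)$ --- the paper's own $M_2(\mathbb{C})$ example of a noncommutative BKW-operator, compression to the diagonal, is not multiplicative --- so there is no evident multiplicative control to propagate.

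You should know, however, that the gap you refused to paper over is present in the paper's own proof of this direction. Its displayed chain ends by asserting that $\langle\phi_n(a^*a)h,h\rangle+\langle\phi(a)^*\phi(a)h,h\rangle-2\,\mathrm{Re}\langle\phi_n(a)h,\phi(a)h\rangle$ tends to $\langle\phi(a^*a)h,h\rangle+\langle\phi(a^*a)h,h\rangle-2\,\mathrm{Re}\langle\phi(a^*a)h,h\rangle=0$, which silently replaces both $\Vert\phi(a)h\Vert^2$ and $\lim_n\mathrm{Re}\langle\phi_n(a)h,\phi(a)h\rangle=\Vert\phi(a)h\Vert^2$ by $\langle\phi(a^*a)h,h\rangle$; that substitution is legitimate only when the Schwarz inequality for $\phi$ at $a$ is an equality. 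The argument is sound when $\phi$ is a $*$-homomorphism (the classical hyperrigidity setting), but for a general UCP limit $\phi$ the implication WOT-BKW $\Rightarrow$ SOT-BKW is not established by your argument or by the paper's; either a genuinely new idea or a counterexample is needed here.
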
 
\begin{proof}
Assume that $\phi$ is a noncommutative BKW-operator in the sense of  Definition  \ref{weak noncommutative BKW-operators}. Let $\phi_{n}(s)\rightarrow \phi(s)$ in SOT,  $\forall s\in S$. Then $\phi_{n}(s)\rightarrow \phi(s)$ in WOT,  $\forall s\in S$. By assumption $\phi_{n}(a)\rightarrow \phi(a)$ in WOT, $\forall a\in A$.   Now, $\forall a\in A$ and $h\in H$,
\begin{align*}
    \Vert \phi_{n}(a)h-\phi(a)h \Vert^2 
    &= \langle \phi_{n}(a)h-\phi(a)h,\phi_{n}(a)h-\phi(a)h \rangle \\
    &=  \langle \phi_{n}(a)h,\phi_{n}(a)h \rangle +\langle \phi(a)h , \phi(a)h \rangle -2Re\langle \phi_{n}(a)h , \phi(a)h \rangle \\   
    &=  \langle \phi_{n}(a)^*\phi_{n}(a)h, ~h \rangle +\langle \phi(a)^*\phi(a)h ,~ h \rangle -2Re\langle \phi_{n}(a)h , \phi(a)h \rangle \\  
    &\leq \langle \phi_{n}(a^*a)h, h \rangle + \langle \phi(a^*a)h , h \rangle -2Re\langle \phi_{n}(a)h , \phi(a)h \rangle \\ 
    & \rightarrow   \langle \phi(a^*a)h , h \rangle+ \langle \phi(a^*a)h , h \rangle- 2Re\langle \phi(a^*a)h , h \rangle=0.
\end{align*}
Thus, $\phi$ is a noncommutative BKW-operator in the sense of Definition \ref{strong noncommutative BKW-operators}.

For the other implication, assume that $\phi$ is a noncommutative BKW-operator in the sense of  Definition  \ref{strong noncommutative BKW-operators}. By \Cref{BKW-operators and unique CP-extensions}, it's enough prove that $\phi_{|S}$ has unique UCP-extension. Suppose $\psi$ is a UCP-extension of  $\phi_{|S}$ then taking  $\phi_{n}=\psi$ on $A$, $\forall n$; we have $\phi_{n}(s)\rightarrow \phi(s)$ in SOT $\forall s\in S$. Then by assumption $\phi_{n}(a)\rightarrow \phi(a)$ in SOT $\forall a\in A$. That is, $\psi(a)=\phi(a)$  $\forall a\in A$. 
\end{proof}

\section{Hyperrigidity with respect to a UCP map}\label{hyperrigidity}
In this section we introduce and study hyperrigidity with respect to a UCP-map which generalizes the well-known notion of hyperrigidity due to Arveson. Before defining the concept of hyperrigidity with respect to a UCP-map, let us first recall the notion of hyperrigidity introduced by Arveson in \cite{Arv11}.

\begin{definition}\cite{Arv11}\label{hyperrigidity of Arveson}
Let $A$ be a $C^*$algebra and $S$ be a set of generators of $A$. Then 
$S$ is said to be  hyperrigid in $A$ if for every faithful representation of  $\sigma:A\rightarrow B(H)$ and every sequence of UCP maps 
$\psi_n:B(H)\rightarrow B(H)$, $n=1,2,...,$
$$\lim_{n\rightarrow \infty}||\psi_n(\sigma{(s)})-\sigma{(s)}||=0,\forall ~s\in S\Rightarrow \lim_{n\rightarrow \infty}||\psi_n(\sigma{(a)})-\sigma{(a)}||=0, \forall ~a\in A.$$ 
\end{definition}

We can rephrase the Definition \ref{hyperrigidity of Arveson} interms of noncommutative BKW-operators as follows. Let $\sigma: A \to B(H)$ be any faithful representation of $A$. If we identify $\sigma(S)$ with $S$, then the Definition \ref{hyperrigidity of Arveson} implies that  identity map is a noncommutative BKW-operator for $S$. Motivated by this, we introduce hyperrigidity with respect to a UCP-map.

\begin{definition}\label{eta-hyperrigidity}
Let $A$ be a $C^*$-algebra in $B(H)$ and $S$ be a set of generators of $A$. Let $\eta: B(H)\rightarrow B(H)$ be a UCP-map. Then 
$S$ is said to be  hyperrigid with respect to $\eta$ if for every sequence of UCP maps 
$\psi_n:B(H)\rightarrow B(H)$, $n=1,2,...,$ and any faithful representation $\sigma: A \to B(H)$,
$$\lim_{n\rightarrow \infty}||\psi_n(\sigma(s))-\eta(\sigma(s))||=0,\forall ~s\in S\Rightarrow \lim_{n\rightarrow \infty}||\psi_n(\sigma(a))-\eta(\sigma(a))||=0, \forall ~a\in A.$$ 
\end{definition} 

As remarked earlier, if $\sigma: A \to B(H)$ is any faithful representation,  then identifying $\sigma(S)$ with $S$ as operator systems, the Definition \ref{eta-hyperrigidity} implies  that 
$\eta$  is a noncommutative BKW-operator  for $S$. It is clear that the Definition  \ref{hyperrigidity of Arveson} is a special case of  Definition      \ref{eta-hyperrigidity} if the UCP-map  $\eta$  is the identity map.

\smallskip
Now, we characterize hyperrigidity with respect to a CP-map along the lines of Arveson\cite[Theorem 2.1]{Arv11}.  Although the proof of the following theorem is similar to that of \cite[Theorem 2.1]{Arv11}, with a slight modification, we provide it here for the sake of completeness.

\begin{theorem}\label{characterisation}
Let $S$ be a separable operator system that generate  a $C^*$-algebra $A$. Let $\eta: A\rightarrow A$ be a UCP-map. Then the following statements are equivalent: 
 \begin{enumerate}
     \item[(i)] $S$ is  hyperrigid with respect to $\eta$ in $A$. 
     \item[(ii)] For every non-degenerate representation $\pi: A \rightarrow B(H)$ of $A$ on  a separable Hilbert space $H$  and for any sequence of UCP-maps $\phi_{n}: A \rightarrow B(H)$ such that  $$\lim\limits_{n\rightarrow \infty}\Vert \phi_{n}(s)-\pi(\eta(s))\Vert =0 ~\forall s\in S \implies $$ 
     $$ \lim\limits_{n\rightarrow \infty}\Vert \phi_{n}(a)-\pi(\eta(a))\Vert =0~\forall a\in A.$$
     \item[(iii)] For every non-degenerate representation $\pi: A \rightarrow B(H)$ of $A$ on a separable Hilbert space $H$, the UCP-map $\pi\circ \eta _{|S}$ has unique CP-extension to $A$, namely $\pi \circ \eta $  itself.
    
     \item[(iv)] For any $C^*$-algebra $B$, and for any  unital $*$-homomorphism $\rho: A \rightarrow B$ and UCP-map $\phi: B \rightarrow B$ such that $\phi(\rho(s))=\rho(\eta(s)) ~\forall s \in S$ then $\phi(\rho(a))=\rho(\eta(a)) ~\forall a \in A$.
 \end{enumerate}
\end{theorem}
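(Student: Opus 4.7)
The plan is to follow the scheme of Arveson's proof of \cite[Theorem 2.1]{Arv11}, replacing the identity there by the UCP map $\eta$, and to organize the argument as $(i)\Leftrightarrow(ii)\Leftrightarrow(iii)\Leftrightarrow(iv)$. Three of the implications are essentially formal and I would dispatch them first. For $(ii)\Rightarrow(iii)$: if $\psi$ is any UCP extension of $\pi\circ\eta|_S$, then applying (ii) to the constant sequence $\phi_n\equiv\psi$ immediately forces $\psi=\pi\circ\eta$. For $(ii)\Rightarrow(i)$: any UCP sequence $\psi_n\colon B(H)\to B(H)$ restricts to UCP maps $\psi_n\circ\sigma\colon A\to B(H)$, reducing (i) directly to (ii). For $(iii)\Rightarrow(iv)$: given $\rho\colon A\to B$ and $\phi\colon B\to B$ as in (iv), restrict to the separable subalgebra $C^*(\rho(A)\cup\phi(\rho(A)))$, choose a faithful non-degenerate representation $\sigma$ of it on a separable Hilbert space $K_{0}$, and observe that $\sigma\circ\phi\circ\rho$ and $\sigma\circ\rho\circ\eta$ are two UCP extensions of $(\sigma\circ\rho)\circ\eta|_S$; (iii) applied to the representation $\sigma\circ\rho$ forces them to agree on $A$, and the injectivity of $\sigma$ then yields (iv).

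The technical heart is the implication $(iii)\Rightarrow(ii)$, which upgrades uniqueness of CP-extensions into a norm-convergence statement. My plan uses the standard quotient trick. Set $Q:=\ell^\infty(\mathbb{N},B(H))/c_{0}(\mathbb{N},B(H))$ and let $\iota\colon B(H)\hookrightarrow Q$ be the $*$-homomorphism induced by the constant embedding $T\mapsto(T,T,T,\ldots)+c_{0}$. Given UCP maps $\phi_n\colon A\to B(H)$ with $\|\phi_n(s)-\pi(\eta(s))\|\to 0$ for every $s\in S$, define a UCP map $\Phi\colon A\to Q$ by $\Phi(a):=[(\phi_n(a))_{n}]+c_{0}$. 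The norm-convergence hypothesis translates precisely to $\Phi|_S=(\iota\circ\pi\circ\eta)|_S$. I would then embed $Q$ faithfully in some $B(K)$, pass to a separable cyclic invariant subspace $K_{0}\subseteq K$ containing the images of a countable generating subset of $A$ under $\iota\circ\pi$, and apply (iii) to the resulting non-degenerate representation $\iota\circ\pi\colon A\to B(K_{0})$: both $\Phi$ and $\iota\circ\pi\circ\eta$ are UCP extensions of $(\iota\circ\pi\circ\eta)|_S$, so by (iii) they coincide on all of $A$, which unpacks to $\|\phi_n(a)-\pi(\eta(a))\|\to 0$ for every $a\in A$. The remaining two links, $(i)\Rightarrow(ii)$ (via Arveson's extension theorem: extend each $\phi_n\colon A\to B(H)$ to a UCP $\Psi_n\colon B(H)\to B(H)$ after identifying $A$ with $\pi(A)$, then apply (i)) and $(iv)\Rightarrow(iii)$ (via a Stinespring dilation of the extending UCP map $\psi$ to convert the uniqueness problem into an instance of (iv) for the dilating $*$-homomorphism), then close the loop.

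The main obstacle I foresee is the separability/cyclicity reduction in the quotient-$C^*$-algebra step of $(iii)\Rightarrow(ii)$: the algebra $Q$ is genuinely non-separable and its faithful representations live on non-separable Hilbert spaces, whereas hypothesis (iii) is stated for representations on separable spaces. Carving out a sufficiently rich countable-cyclic subspace that simultaneously sees the actions of $\iota\circ\pi$, $\iota\circ\pi\circ\eta$, and $\Phi$ on a countable dense subset of $A$ requires care, but is possible because $S$ (and hence $A$) is separable. Once that reduction is justified, the remainder of the argument is bookkeeping that mirrors Arveson's original proof with $\eta$ inserted in the appropriate places.
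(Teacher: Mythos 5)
Your architecture follows Arveson's scheme for \cite[Theorem 2.1]{Arv11} with $\eta$ inserted, and you attempt more of the cycle than the paper actually writes down (the paper's proof records only $(i)\Rightarrow(ii)$ and the trivial $(ii)\Rightarrow(iii)$, leaving the remaining links to the reader). The formal steps $(ii)\Rightarrow(iii)$, $(ii)\Rightarrow(i)$ and $(iii)\Rightarrow(iv)$ are fine as sketched. However, two of your links have real gaps. The first is $(i)\Rightarrow(ii)$: ``identify $A$ with $\pi(A)$ and extend each $\phi_n$ to a UCP map on $B(H)$'' breaks down when $\pi$ is not faithful, and $(ii)$ quantifies over \emph{arbitrary} non-degenerate representations while hyperrigidity $(i)$ only gives information about faithful ones; there is no map $\phi_n\circ\pi^{-1}$ to extend if $\ker\pi\neq 0$. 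This is exactly the point the paper's argument is built to handle: fix a faithful representation $\sigma\colon A\to B(K)$, observe that $\sigma\oplus\pi$ is faithful on $K\oplus H$, define UCP maps on $(\sigma\oplus\pi)(A)$ by $(\sigma\oplus\pi)(a)\mapsto \sigma(\eta(a))\oplus\phi_n(a)$, extend them to all of $B(K\oplus H)$ by Arveson's extension theorem \cite{Arv69}, apply $(i)$ to the faithful representation $\sigma\oplus\pi$, and read off the conclusion for $\phi_n$ from the second summand. Without some such device your cycle does not close, since $(i)\Rightarrow(ii)$ is the only arrow leaving $(i)$ in your plan.

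The second gap is the separable reduction inside $(iii)\Rightarrow(ii)$, which you correctly identify as the delicate point but resolve with the wrong object. A subspace $K_0\subseteq K$ that is merely invariant under $\iota\pi(A)$ and contains the images of a countable generating set lets you apply $(iii)$ only to the \emph{compressions}, yielding $P_{K_0}\Phi(a)P_{K_0}=P_{K_0}\,\iota\pi\eta(a)\,P_{K_0}$; this does not recover $\Phi(a)=\iota\pi\eta(a)$ in $Q$ unless $K_0$ also reduces the ranges of $\Phi$ and of $\iota\circ\pi\circ\eta$ and the family of such subspaces is jointly separating. The clean repair is to cut down the algebra rather than the Hilbert space: since $S$ (hence $A$) is separable, the $C^*$-subalgebra $Q_0\subseteq Q$ generated by $\iota\pi(A)\cup\iota\pi\eta(A)\cup\Phi(A)$ is separable, so it admits a faithful non-degenerate representation $\sigma_0$ on a separable Hilbert space; then $\sigma_0\circ\iota\circ\pi$ is a unital (hence non-degenerate) representation of $A$ on a separable space, $\sigma_0\circ\Phi$ and $\sigma_0\circ\iota\circ\pi\circ\eta$ are two UCP extensions of its composition with $\eta|_S$, and $(iii)$ plus faithfulness of $\sigma_0$ on $Q_0$ gives $\Phi=\iota\circ\pi\circ\eta$ on $A$, i.e.\ $\Vert\phi_n(a)-\pi(\eta(a))\Vert\to 0$. (The same separable-subalgebra device is what makes your $(iii)\Rightarrow(iv)$ step rigorous, and a version of it is also needed in $(ii)\Rightarrow(i)$ if the faithful representation in Definition \ref{eta-hyperrigidity} is allowed to act on a non-separable space.) With these two repairs your proof is a correct, and more complete, implementation of the approach the paper takes.
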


\begin{proof} 
$(i) \Longrightarrow (ii)$ : Assume $S$ is $\eta$-hyperrigid. Let $\pi: A \rightarrow B(H)$ be a non-degenerate representation of $A$ on  a separable Hilbert space $H$  and  let $\phi_{n}: A \rightarrow B(H)$ be a sequence of UCP-maps such that $$\lim\limits_{n\rightarrow \infty}\Vert \phi_{n}(s)-\pi(\eta(s))\Vert =0 ~\forall s\in S.$$ Consider a faithful representation $\sigma: A \rightarrow B(K)$, the direct sum representation $\sigma \oplus \pi: A \rightarrow B(H\oplus K)$ is also a faithful representation of $A$. 
Consider the UCP-maps $\sigma_{n}: (\sigma \oplus \pi)(A) \rightarrow B(H\oplus K)$ given by 
$$\sigma_{n}(\sigma \oplus \pi)(a)=\sigma(\eta(a)) \oplus \phi_{n}(a).$$
Then by  Arveson extension theorem \cite{Arv69}  we have a UCP-map $\tilde{\sigma_{n}}:  B(H\oplus K)\rightarrow  B(H\oplus K)$ that extends $\sigma_{n}$. 
Now, for each $s\in S$,
\begin{align*}
\Vert \tilde{\sigma_{n}}(\sigma \oplus \pi)(s)-(\sigma(\eta) \oplus \pi)(s) \Vert =& \Vert \sigma_{n}(\sigma \oplus \pi)(s) - (\sigma(\eta) \oplus \pi)(s) \Vert \\
=& \Vert \sigma(\eta(s)) \oplus \phi_{n}(s) - \sigma((\eta(s)) \oplus \pi(s) \Vert \\
=& \Vert \phi_{n}(s) - \pi(s)  \Vert \rightarrow 0 ~as ~ n \rightarrow \infty.
\end{align*}
Thus the sequence of UCP-maps $\tilde{\sigma_{n}}$ converges to identity map of $(\sigma(\eta) \oplus \pi)(A)$ on the operator system $(\sigma \oplus \pi)(S)$

Since $\phi_{m}(s)$ converges to $\pi(s)$ on $H_{n}$ for each $s\in S$, we have that $\tilde{\sigma_{n}}(s)$ converges $(\sigma \oplus \pi )(s)$ on $H_{n}\oplus H_{n}'$. By assumption $(i)$, it follows that $\tilde{\sigma_{n}}(a)$ converges $(\sigma \oplus \pi )(a)$ on $H_{n}\oplus H_{n}'$ for all $a\in A$. Then $\forall ~ a\in A$,
 \begin{align*}
 \limsup\limits_{n\rightarrow \infty} \Vert \phi_{n}(a)-\pi(a) \Vert_{n} &\leq  \limsup\limits_{n\rightarrow \infty} \Vert  \sigma(a)\oplus \phi_{n}(a)-\sigma(a)\oplus\pi(a) \Vert_{n}\\
 &= \limsup\limits_{n\rightarrow \infty} \Vert  \sigma_{n}(\sigma(a)\oplus \phi_{n}(a))-\sigma(a)\oplus\pi(a) \Vert_{n}\\
 &=0.
 \end{align*}
 
$(ii) \Longrightarrow (iii)$: Follows easily.
\end{proof}

\begin{corollary}
Let $S$ be an operator system in a $C^*$-algebra $A\subseteq B(H)$ such that $A=C^*(S)$. If $S$ is  hyperrigid, then  $S$ is   hyperrigid with respect to $\eta$ for each surjective UCP-map  $\eta : A\rightarrow A$.  
\end{corollary}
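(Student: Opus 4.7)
The plan is to invoke \Cref{characterisation} and verify condition (iii) for the given $\eta$. It suffices to show that for every non-degenerate representation $\pi : A \to B(H)$ on a separable Hilbert space, the restriction $\pi \circ \eta |_S$ admits a unique CP-extension to $A$, necessarily $\pi \circ \eta$ itself. So I fix such a $\pi$ and let $\Phi : A \to B(H)$ be any UCP-map with $\Phi|_S = \pi \circ \eta|_S$; the aim is to conclude $\Phi = \pi \circ \eta$ on all of $A$. Nothing is assumed about multiplicativity of $\eta$: only that $\eta$ is UCP and $\eta(A) = A$.

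I would then pass to the minimal Stinespring dilations of both UCP-maps. Let $(K_1, \tilde\rho, V_1)$ be the minimal Stinespring dilation of $\pi \circ \eta$, with $\tilde\rho : A \to B(K_1)$ a non-degenerate representation, $V_1 : H \to K_1$ an isometry, and $\pi(\eta(a)) = V_1^* \tilde\rho(a) V_1$; let $(K_2, \tilde\sigma, V_2)$ be the analogous data for $\Phi$. Since $S$ is hyperrigid in $A$, Arveson's characterization (which is exactly the $\eta = \mathrm{id}_A$ case of \Cref{characterisation}) shows that every non-degenerate representation of $A$ has the unique extension property relative to $S$. In particular $\tilde\rho|_S$ extends uniquely to $\tilde\rho$ and $\tilde\sigma|_S$ extends uniquely to $\tilde\sigma$. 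The problem then reduces to showing $V_1^* \tilde\rho(a) V_1 = V_2^* \tilde\sigma(a) V_2$ for every $a \in A$, knowing this equality on $S$.

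The decisive step is to use the surjectivity of $\eta$ to close this gap. One natural route is to form the direct-sum representation $\tilde\rho \oplus \tilde\sigma : A \to B(K_1 \oplus K_2)$, which still has UEP relative to $S$ by hyperrigidity, and to produce UCP extensions of $(\tilde\rho \oplus \tilde\sigma)|_S$ from $S$ to $A$ whose $2 \times 2$ block structure packages the discrepancy between $\Phi$ and $\pi \circ \eta$; the surjectivity $\eta(A) = A$ is then used to force, via UEP of the direct sum, that this discrepancy must vanish on all of $A$. An alternative route is to verify \Cref{characterisation}(iv) directly: given $B$, a unital $*$-homomorphism $\rho : A \to B$, and a UCP-map $\phi : B \to B$ with $\phi(\rho(s)) = \rho(\eta(s))$ for every $s \in S$, combine the identity-hyperrigidity applied to $\rho$ with the surjectivity of $\eta$ (e.g.\ choosing, for each $a \in A$, some $b$ with $\eta(b) = a$ and feeding this through the dilation picture) to reduce the $\eta$-version of (iv) to the $\mathrm{id}_A$-version.

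The main obstacle is exactly this bridging step. Hyperrigidity guarantees unique extension from $S$ only at the level of $*$-representations via UEP, whereas $\pi \circ \eta$ is merely a UCP-map when $\eta$ fails to be multiplicative, and in general UCP-maps need not be determined by their restrictions to a generating operator system (a Korovkin-type moment phenomenon). Thus the surjectivity hypothesis on $\eta$ is doing real work: it must be converted into enough dilation-theoretic data to pass from UEP for the Stinespring representations $\tilde\rho, \tilde\sigma$ back down to an equality of their compressions $V_i^* \tilde\rho(a) V_i$ on all of $A$. Carrying out this conversion without assuming that $\eta$ is a $*$-homomorphism is the delicate part of the argument and is where I expect the main technical effort.
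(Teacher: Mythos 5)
You have produced a plan rather than a proof. The reduction to condition (iii) of \Cref{characterisation} is the right first move, and you correctly isolate the crux: hyperrigidity of $S$ gives the unique extension property only for $*$-representations, whereas $\pi\circ\eta$ is merely a UCP map when $\eta$ is not multiplicative. But the ``decisive step'' --- converting surjectivity of $\eta$ into uniqueness of the CP-extension of $\pi\circ\eta|_{S}$ --- is exactly what you leave undone, and neither of your two sketched routes closes it. In the first route, the compressions $V_{i}^{*}\tilde\rho(\cdot)V_{i}$ and $V_{i}^{*}\tilde\sigma(\cdot)V_{i}$ are not UCP maps into $B(K_1\oplus K_2)$ extending $(\tilde\rho\oplus\tilde\sigma)|_{S}$, so the unique extension property of $\tilde\rho\oplus\tilde\sigma$ places no constraint on them. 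In the second route, choosing $b$ with $\eta(b)=a$ gives no information about $\phi(\rho(a))$, because the hypothesis constrains $\phi\circ\rho$ only through the values $\rho(\eta(s))$ for $s\in S$, not at the preimages $b$.

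Moreover, this is not a gap that more technical effort can fill: the statement fails for non-multiplicative surjective UCP maps, so the bridging step you defer cannot exist in the claimed generality. Take $A=M_{2}(\mathbb{C})$ and $S=\operatorname{span}\{I,E_{12},E_{21}\}$, so that $C^{*}(S)=M_{2}(\mathbb{C})$. If a UCP map $\Phi$ agrees with a representation $\pi$ on $S$, the Schwarz inequality gives $\pi(E_{22})=\Phi(E_{12})^{*}\Phi(E_{12})\le\Phi(E_{22})$ and likewise $\pi(E_{11})\le\Phi(E_{11})$; summing and comparing with $\Phi(I)=\pi(I)$ forces equality, so every non-degenerate representation has the unique extension property and $S$ is hyperrigid. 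Now let $\eta(x)=\lambda x+(1-\lambda)\,\mathrm{tr}(x)\tfrac{I}{2}$ with $0<\lambda<1$: this is a unital, completely positive, bijective (hence surjective) map on $M_{2}(\mathbb{C})$ with $\eta(I)=I$, $\eta(E_{12})=\lambda E_{12}$, $\eta(E_{21})=\lambda E_{21}$. The Schur multiplier by the positive matrix $\bigl(\begin{smallmatrix}1&\lambda\\ \lambda&1\end{smallmatrix}\bigr)$ is a second UCP map taking the same values on $S$ but sending $E_{11}$ to $E_{11}\ne\eta(E_{11})$. Hence $(\mathrm{id}\circ\eta)|_{S}$ has two distinct CP-extensions, and by \Cref{characterisation} $S$ is not hyperrigid with respect to $\eta$. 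Your outlined reduction does succeed when $\eta$ is a $*$-automorphism, since then $\pi\circ\eta$ is again a representation and (iii) follows at once from the unique extension property; it is multiplicativity, not surjectivity, that makes the argument work.
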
 

From statements (i) and (iii) of Theorem~\ref{characterisation}, together with Theorem~\ref{BKW-operators and unique CP-extensions}, it follows that for $A = B(H)$, an operator system $S$ is hyperrigid with respect to $\eta$ if and only if $\pi \circ \eta$ is a noncommutative BKW-operator for $S$ for every non-degenerate representation $\pi$.

Similar to  \cite[Problem $5.3$ and Problem $5.4$]{Alt10}, the Definition \ref{eta-hyperrigidity} inherently leads to the following two natural problems:

\begin{problem}\label{Problem 1}
Given a $C^*$-algebra $A$, a Hilbert space $H$ and a completely positive map $ \eta :A\rightarrow B(H) $, find conditions under which there exists a nontrivial  operator system $S$ which is hyperrigid for $\eta$ in $A$. In this case, try to determine some or all of them.
\end{problem} 

\begin{problem}\label{Problem 2}
Given a  $C^*$-algebra $A$, a Hilbert space $H$ and an operator system $S$ of $A$, try to determine some or all of the completely positive linear maps $\eta :A\rightarrow B(H)$  for which $S$ is a hyperrigid set for $\eta$. 
\end{problem} 

Theorem \ref{existence of BKW-operator} establishes the existence of a noncommutative BKW-operator for certain Hilbert spaces. In fact, boundary representations of an operator system $S$ form a class of noncommutative BKW-operators for $S$. Exploration of Problems \ref{Problem 1} and Problem \ref{Problem 2} is left for future research.

In view of \Cref{BKW-operators and unique CP-extensions}, Problem \ref{Problem 2} may be restated as follows.

\begin{problem}
What are the completely positive maps on an operator system that admit a unique completely positive extension to the generated $C^*$-algebra?
\end{problem} 
To the best of our knowledge, CP-maps with unique CP-extensions have not been addressed in the existing literature, except in the case of representations.

\section{A noncommutative Korovkin type Theorem}

In the previous sections, we considered UCP-maps. In this section, we turn our attention to nonunital CP-maps in order to present a noncommutative analogue of the operator version of Korovkin’s theorem, recently established by Popa\cite[Theorem 1]{Pop22}.  Let us first formally state the result as follows.\\

Let $\textbf{1}, e_{1}\text{ and } e_{2}$ denotes the functions $\textbf{1}(x)=1,e_{1}(x)=x\text{ and }e_{2}(x)=x^2$ on $[a,b]$, respectively.
 \begin{theorem}\cite[Theorem 1]{Pop22}
Let $X$ be a compact Hausdorff space, $V_{n}:C[a,b]\rightarrow C(X)$ be a sequence of  linear positive operators and $V:C[a,b]\rightarrow C(X)$ a linear positive operator such that $V(\textbf{1})V(e_{1})=[V(e_{1})]^2$ and $V(e_{1})(x)>0$, $\forall~x\in X$. If $\lim\limits_{n\rightarrow \infty}V_{n}(\textbf{1})=V(\textbf{1})$, $\lim\limits_{n\rightarrow \infty}V_{n}(e_{1})=V(e_{1})$ and $\lim\limits_{n\rightarrow \infty}V_{n}(e_{2})=V(e_{2})$  all uniformly on $X$, then for every $f\in C[a,b]$,  $\lim\limits_{n\rightarrow \infty}V_{n}(f)=V(f)$ uniformly on $X$.
 \end{theorem}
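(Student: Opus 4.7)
The plan is to reduce to the classical scalar Korovkin argument by first exploiting the algebraic identity on $V$ to pin down its pointwise action, and then running the usual $(s-t)^2$ quadratic sandwich uniformly in $x \in X$.

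First I would represent $V$ and $V_n$ pointwise via the Riesz representation theorem: for each $x \in X$, $V(f)(x) = \int_{[a,b]} f\,d\mu_x$ and $V_n(f)(x) = \int_{[a,b]} f \, d\mu_{n,x}$ for positive finite Borel measures $\mu_x, \mu_{n,x}$ on $[a,b]$. Positivity combined with $V(e_1)(x) > 0$ gives $V(\mathbf{1})(x) > 0$. The hypothesized algebraic identity—read as the Cauchy--Schwarz equality $\mu_x(\mathbf{1})\mu_x(e_2) = \mu_x(e_1)^2$, which is the commutative self-adjoint counterpart of the Schwarz-type identity used in Theorem~C—is precisely the equality case of the Cauchy--Schwarz inequality in $L^2(\mu_x)$ applied to $\mathbf{1}$ and $e_1$. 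Equality forces $e_1$ to be constant $\mu_x$-almost everywhere, so $\mu_x = V(\mathbf{1})(x)\,\delta_{t_x}$ with $t_x := V(e_1)(x)/V(\mathbf{1})(x) \in [a,b]$, and $x \mapsto t_x$ is continuous. In other words, $V(f)(x) = V(\mathbf{1})(x)\,f(t_x)$ for every $f \in C[a,b]$.

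Next I would run the Korovkin quadratic sandwich. Given $f \in C[a,b]$ and $\varepsilon > 0$, uniform continuity of $f$ yields $M > 0$ with $|f(s) - f(t)| \le \varepsilon + M(s-t)^2$ on $[a,b]\times[a,b]$. Writing $g_t(s) = (s-t)^2 = e_2(s) - 2t\,e_1(s) + t^2\mathbf{1}(s)$ and applying the positive map $V_n$ pointwise at $x$ with the scalar $t = t_x$, positivity gives
\[
|V_n(f)(x) - V_n(\mathbf{1})(x)\, f(t_x)| \;\le\; \varepsilon\, V_n(\mathbf{1})(x) + M\, V_n(g_{t_x})(x).
\]
Expanding $V_n(g_{t_x})(x)$ linearly in $V_n(\mathbf{1}), V_n(e_1), V_n(e_2)$ and using the three hypothesized uniform convergences together with the uniform bound $|t_x| \le \max(|a|,|b|)$, one gets $\sup_x |V_n(g_{t_x})(x) - V(g_{t_x})(x)| \to 0$. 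A short computation using the rigidity step shows $V(g_{t_x})(x) \equiv 0$ (it is the variance of the Dirac mass $\mu_x$), so the right-hand side above tends to $\varepsilon\,V(\mathbf{1})(x)$ uniformly. Combined with the uniform convergence $V_n(\mathbf{1})(x)\, f(t_x) \to V(\mathbf{1})(x) f(t_x) = V(f)(x)$, this yields $\limsup_n \|V_n(f) - V(f)\|_\infty \le \varepsilon\, \|V(\mathbf{1})\|_\infty$, and $\varepsilon$ is arbitrary.

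The main obstacle is the rigidity step: turning the algebraic identity on the three test functions into the rigid statement that each $\mu_x$ is a Dirac mass whose atom $t_x$ depends continuously on $x$. This is the ``extremality'' input that plays the same role here as the unique CP-extension condition in Theorem A, and it is what makes the finite test set $\{\mathbf{1}, e_1, e_2\}$ suffice to control $V$ on all of $C[a,b]$. Once that structural fact is in place, the rest is a routine transcription of the classical Korovkin quadratic sandwich to this ``weighted composition'' limit.
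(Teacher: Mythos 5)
Your argument is correct, and it is worth noting up front that you have silently (and rightly) repaired a typo in the statement: as printed, the hypothesis $V(\mathbf{1})V(e_{1})=[V(e_{1})]^{2}$ together with $V(e_{1})(x)>0$ would just say $V(\mathbf{1})=V(e_{1})$; the intended condition, which you use, is $V(\mathbf{1})V(e_{2})=[V(e_{1})]^{2}$, matching both Popa's original theorem and the noncommutative condition $\phi(a^*a)=\phi(a^*)\phi(1)^{-1}\phi(a)$ of Theorem C. The paper itself does not prove this statement --- it is quoted from Popa --- so the natural comparison is with the proof of its noncommutative generalization (Theorem \ref{noncommutative Popa's theorem}), and there the route is genuinely different from yours: the paper normalizes, setting $\psi=\phi(1)^{-1/2}\phi(\cdot)\phi(1)^{-1/2}$, uses the 2-positivity/multiplicative-domain theorem \cite[Theorem 3.18]{Pau02} to conclude that $\psi$ is a $*$-homomorphism on $C^*(a)$, and then invokes the Limaye--Namboodiri noncommutative Korovkin theorem \cite[Corollary 1.4]{LN84} before un-normalizing. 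In the commutative shadow this amounts to showing $W=V(\mathbf{1})^{-1}V$ is a composition operator $f\mapsto f\circ\tau$ and citing the classical Korovkin theorem, whereas you prove the rigidity directly: Riesz representation plus the equality case of Cauchy--Schwarz forces each representing measure $\mu_x$ to be the Dirac mass $V(\mathbf{1})(x)\,\delta_{t_x}$, after which the quadratic sandwich $|f(s)-f(t)|\le\varepsilon+M(s-t)^{2}$ runs uniformly in $x$ because $t_x$ is uniformly bounded and $V(g_{t_x})(x)\equiv 0$. Your approach is self-contained and elementary, making the mechanism (point evaluations as the extremal/rigid objects) completely explicit, at the cost of being tied to the commutative setting; the paper's normalization-to-homomorphism argument is less transparent pointwise but is exactly what survives the passage to Schwarz and 2-positive maps on $C^*$-algebras. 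The only cosmetic remark is that continuity of $x\mapsto t_x$, while true, is never actually needed in your estimates --- uniform boundedness of $t_x$ suffices.
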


Here, we provide a noncommutative version of Popa’s result, and the proof is obtained by adapting the technique used in \cite{AP25}. While searching for a noncommutative analogue of a classical result, one typically replaces positivity with complete positivity. However, in this case, the result holds even for more general maps—namely, Schwarz maps and 2-positive maps. Here we first establish the result in a more general setting, from which the case of completely positive (CP) maps is derived as a direct consequence, see \Cref{noncommutative  theorem}.

Let $A$ and B be  unital \( C^* \)-algebras. Recall that a linear map $\phi: A \to B$ is said to be a Schwarz map if $\phi(x^*x)\geq \phi(x)^*\phi(x)$ for all $x \in A$.  Now fix an element \( a \in A \), and denote by  \( A_0 = C^*(a) \), the \( C^* \)-subalgebra of \( A \) generated by \( a \). Then we have,

\begin{theorem}\label{noncommutative Popa's theorem}
Let  $\phi_{n}: A_{0}\rightarrow B$ be a sequence of Schwarz maps and $\phi: A\rightarrow B$ be a 2-positive linear map  such that $\phi(1)$ is invertible,   $\phi(a^*a)=\phi(a^*)\phi(1)^{-1}\phi(a)$ and  $\phi(aa^*)=\phi(a)\phi(1)^{-1}\phi(a^*)$. If $\lim\limits_{n\rightarrow \infty} \phi_{n}(1)=\phi(1)$,  $\lim\limits_{n\rightarrow \infty} \phi_{n}(a)=\phi(a)$,\\
$\lim\limits_{n\rightarrow \infty} \phi_{n}(a^*a)=\phi(a^*a)$, $\lim\limits_{n\rightarrow \infty} \phi_{n}(aa^*)=\phi(aa^*)$ in norm, then for every $x\in C^*(a)$, $\lim\limits_{n\rightarrow \infty} \phi_{n}(x)=\phi(x)$ in norm. 
\end{theorem}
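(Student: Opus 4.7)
My plan is to adapt the noncommutative Korovkin argument of Limaye--Namboodiri \cite{LN84} by first normalizing away the non-unitality of $\phi$ using the invertibility of $\phi(1)$, so as to reduce the problem to the standard setting of convergence to a $*$-homomorphism. Since Schwarz maps are positive, each $\phi_n$ is $*$-preserving (so $\phi_n(a^*) = \phi_n(a)^* \to \phi(a^*)$) and satisfies $\|\phi_n\| = \|\phi_n(1)\|$, which gives a uniform norm bound. An $\varepsilon/3$-density argument therefore reduces the problem to showing norm convergence $\phi_n(x) \to \phi(x)$ for $x$ in the dense $*$-subalgebra of non-commutative polynomials in $a$ and $a^*$.

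For $n$ large, $\phi_n(1)$ is invertible (since $\phi_n(1) \to \phi(1)$ in norm), so one may define
\[
\tilde\phi(x) = \phi(1)^{-1/2}\phi(x)\phi(1)^{-1/2}, \qquad \tilde\phi_n(x) = \phi_n(1)^{-1/2}\phi_n(x)\phi_n(1)^{-1/2}.
\]
A short calculation using the hypotheses shows that $\tilde\phi$ is unital and $2$-positive, and satisfies $\tilde\phi(a^*a) = \tilde\phi(a)^*\tilde\phi(a)$ and $\tilde\phi(aa^*) = \tilde\phi(a)\tilde\phi(a)^*$. By Choi's multiplicative domain theorem, $a$ lies in the multiplicative domain of $\tilde\phi$, so $\pi := \tilde\phi|_{C^*(a)}$ is a unital $*$-homomorphism. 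The hypotheses translate to $\tilde\phi_n(s) \to \pi(s)$ in norm for each $s \in \{1, a, a^*, a^*a, aa^*\}$, and if we can prove $\tilde\phi_n(x) \to \pi(x)$ for all $x \in C^*(a)$, then conjugating by $\phi_n(1)^{1/2} \to \phi(1)^{1/2}$ recovers $\phi_n(x) \to \phi(x)$.

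The remaining step is a noncommutative Korovkin theorem for Schwarz-type maps converging to a $*$-homomorphism on $C^*(a)$. The key input is a Kadison--Schwarz-type estimate of the form
\[
\|\tilde\phi_n(ay) - \tilde\phi_n(a)\tilde\phi_n(y)\|^2 \leq C\,\|\tilde\phi_n(a^*a) - \tilde\phi_n(a)^*\tilde\phi_n(a)\|\,\|\tilde\phi_n(y^*y)\|,
\]
whose right-hand side tends to zero because both $\tilde\phi_n(a^*a)$ and $\tilde\phi_n(a)^*\tilde\phi_n(a)$ converge in norm to $\pi(a^*a) = \pi(a)^*\pi(a)$. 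Inducting on the length of monomials in $\{a, a^*\}$ and using the analogous estimate with $a^*$ in place of $a$, one obtains $\tilde\phi_n(w) \to \pi(w)$ for every word $w$ in $a, a^*$, and density plus uniform boundedness extends this to all of $C^*(a)$.

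The principal obstacle is that $\tilde\phi_n$ need not itself be a Schwarz map: sandwiching $\phi_n(x^*x) \geq \phi_n(x)^*\phi_n(x)$ by $\phi_n(1)^{-1/2}$ introduces an extra factor of $\phi_n(1)$ which need not be $\geq 1$. The Kadison--Schwarz estimate above must therefore be proved by working with the $\phi_n$ directly (where Schwarz genuinely holds) and then transferred to $\tilde\phi_n$ via the norm-convergent factors $\phi_n(1)^{\pm 1/2} \to \phi(1)^{\pm 1/2}$. Careful bookkeeping of these normalization factors---so that the non-commutativity of $\phi_n(1)$ with $\phi_n(a)$ does not degrade the estimates---is the technical heart of the proof, and is precisely where the technique imported from \cite{AP25} enters.
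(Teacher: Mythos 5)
Your proposal is correct in outline and follows essentially the same strategy as the paper: conjugate by $\phi(1)^{-1/2}$ so that the normalized limit map is unital, invoke the multiplicative-domain theorem for $2$-positive maps (\cite[Theorem 3.18(iii)]{Pau02}) to see that it is a $*$-homomorphism on $C^*(a)$, transfer the convergence on the test set, and finish with a noncommutative Korovkin theorem for Schwarz maps converging to a $*$-homomorphism. There are two points of divergence. First, the paper normalizes the approximating sequence by the \emph{fixed} element $\phi(1)^{-1/2}$, setting $\psi_n = \phi(1)^{-1/2}\phi_n(\cdot)\,\phi(1)^{-1/2}$; this makes the transfer of convergence on $\{1,a,a^*a,aa^*\}$ a one-line norm estimate, never requires $\phi_n(1)$ to be invertible, and lets one return to $\phi_n$ at the end by conjugating with the same fixed $\phi(1)^{1/2}$. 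Your $n$-dependent normalization $\phi_n(1)^{-1/2}$ forces you to discard finitely many terms and to track the noncommuting factors $\phi_n(1)^{\pm 1/2}$ throughout --- exactly the bookkeeping you flag as the ``technical heart,'' and which the paper's choice avoids entirely. Second, where you propose to re-derive the final Korovkin step by a Kadison--Schwarz estimate and induction on word length, the paper simply cites \cite[Corollary 1.4]{LN84} as a black box; your sketch is in effect a re-proof of that result, so nothing is gained unless you want a self-contained argument. One caveat in your favour: your observation that the normalized maps need not themselves be Schwarz is a genuine subtlety, and it applies equally to the paper's $\psi_n$ (which the paper asserts to be $2$-positive without justification); if you carry the Schwarz inequality through the conjugation carefully, your write-up would be more scrupulous on this point than the published argument.
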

\begin{proof}
Consider the linear map $\psi:A_{0}\rightarrow B$ defined by $$\psi(x)=\phi(1)^{-\frac{1}{2}}\phi(x)\phi(1)^{-\frac{1}{2}}~~\forall x\in A_{0}.$$
Then $B$ is a positive map as $\phi(1)$ is self adjoint and conjugation preserves positivity. Also,
$$\psi(1)=\phi(1)^{-\frac{1}{2}}\phi(1)\phi(1)^{-\frac{1}{2}}=1,$$
$$\psi(a)=\phi(1)^{-\frac{1}{2}}\phi(a)\phi(1)^{-\frac{1}{2}} \text{ and }$$
\begin{align*}
   \psi(a^*a)&=\phi(1)^{-\frac{1}{2}}\phi(a^*a)\phi(1)^{-\frac{1}{2}} \\
            &=\phi(1)^{-\frac{1}{2}} \phi(a^*)\phi(1)^{-1}\phi(a)\phi(1)^{-\frac{1}{2}} \\
            &= \psi(a^*)\psi(a).
\end{align*}
$$\text{Similarly,  }~\psi(aa^*)=\psi(a)\psi(a^*).$$

By \cite[Theorem 3.18 (iii)]{Pau02},  $\psi$ is a $*$-homomorphism on $A_{0}$. 

Now, note that, a new  sequence of 2-positive linear maps  
$$\psi_{n}=\phi(1)^{-\frac{1}{2}}\phi_{n}\phi(1)^{-\frac{1}{2}}$$
has the following properties: 
$$\psi_{n}(1)=\phi(1)^{-\frac{1}{2}}\phi_{n}(1)\phi(1)^{-\frac{1}{2}}\rightarrow 1 =\psi(1)\text{ as } n\rightarrow \infty ,$$ 
\begin{align*}
    \Vert \psi_{n}(a)-\psi(a)\Vert &= \Vert \phi(1)^{-\frac{1}{2}}\phi_{n}(a)\phi(1)^{-\frac{1}{2}}-\phi(1)^{-\frac{1}{2}}\phi(a)\phi(1)^{-\frac{1}{2}}\Vert \\
                             & \leq \Vert \phi(1)^{-\frac{1}{2}} \Vert ^{2} \Vert \phi_{n}(a)-\phi(a)\Vert 
                              \rightarrow 0 \text{ as } n\rightarrow \infty
\end{align*}
\begin{align*}
     \Vert \psi_{n}(a^*a)-\psi(a^*a)\Vert &= \Vert \phi(1)^{-\frac{1}{2}}\phi_{n}(a^*a)\phi(1)^{-\frac{1}{2}}-\phi(1)^{-\frac{1}{2}}\phi(a^*a)\phi(1)^{-\frac{1}{2}}\Vert \\
                             & \leq \Vert \phi(1)^{-\frac{1}{2}} \Vert ^{2} \Vert \phi_{n}(a^*a)-\phi(a^*a)\Vert 
                              \rightarrow 0 \text{ as } n\rightarrow \infty.
\end{align*}
Similarly $ \Vert \psi_{n}(a^*a)-\psi(a^*a)\Vert \rightarrow 0 \text{ as } n\rightarrow \infty $.
Thus $\psi_{n}(1)\rightarrow \psi(1)$, $ \psi_{n}(a)\rightarrow \psi(a) $, $ \psi_{n}(a^*a)\rightarrow \psi(a^*a) $ and $ \psi_{n}(aa^*)\rightarrow \psi(aa^*) $  as $ n\rightarrow \infty $.

Then, since $\psi$ is a $*$-homomorphism   and by \cite[Corollary 1.4]{LN84}, we have 
$$ \psi_{n}(x)\rightarrow \psi(x)   \text{ as }  n\rightarrow \infty ~~\forall x\in A_{0}).$$ 

Now, $\forall  x\in A_{0}$, 

\begin{align*}
\Vert \phi_{n}(x)-\phi(x) \Vert &=\Vert \phi(1)^{\frac{1}{2}}\psi_{n}(x)\phi(1)^{\frac{1}{2}}- \phi(1)^{\frac{1}{2}}\psi(x)\phi(1)^{\frac{1}{2}} \Vert \\
                          &\leq \Vert \phi(1)^{\frac{1}{2}} \Vert ^{2} \Vert \Vert \psi_{n}(x)- \psi(x) \Vert 
                         \rightarrow 0 \text{ as } n\rightarrow \infty .
\end{align*}

\end{proof} 

\begin{remark}
    The assumption of $\phi_{n}$ to be a Schwarz map is to use the result   \cite[Corollary 1.4]{LN84} and  the assumption $\phi$ to be a 2-positive is to use \cite[Theorem 3.18 (iii)]{Pau02}.  Note that, in \cite[Theorem 3.18 (iii)]{Pau02}) $\phi$ is assumed to be CP but the proof requires only 2-positivity of $\phi$. 
\end{remark} 

Since CP-maps are  both Schwarz and 2-positive, we obtain the following corollary to Theorem \ref{noncommutative Popa's theorem}.

\begin{corollary}\label{noncommutative  theorem}
Let  $\phi,\phi_{n}: A_{0}\rightarrow B$ be a sequence of CP-maps  such that $\phi(1)$ is invertible,   $\phi(a^*a)=\phi(a^*)\phi(1)^{-1}\phi(a)$ and  $\phi(aa^*)=\phi(a)\phi(1)^{-1}\phi(a^*)$. If $\lim\limits_{n\rightarrow \infty} \phi_{n}(1)=\phi(1)$,  $\lim\limits_{n\rightarrow \infty} \phi_{n}(a)=\phi(a)$,
$\lim\limits_{n\rightarrow \infty} \phi_{n}(a^*a)=\phi(a^*a)$, $\lim\limits_{n\rightarrow \infty} \phi_{n}(aa^*)=\phi(aa^*)$ in norm, then for every $x\in C^*(a)$, $\lim\limits_{n\rightarrow \infty} \phi_{n}(x)=\phi(x)$ in norm. 
\end{corollary}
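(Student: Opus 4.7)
The plan is to derive Corollary \ref{noncommutative  theorem} as an immediate consequence of Theorem \ref{noncommutative Popa's theorem}. The strategy rests on two standard facts about completely positive maps between unital $C^{*}$-algebras: first, every CP-map is $n$-positive for all $n\in\mathbb{N}$, in particular $2$-positive, so the limit map $\phi$ automatically satisfies the $2$-positivity hypothesis of the theorem; second, every CP-map is a Schwarz map, i.e.\ satisfies $\phi_{n}(x^{*}x)\geq \phi_{n}(x)^{*}\phi_{n}(x)$ for all $x\in A_{0}$, which places the sequence $\{\phi_{n}\}$ within the hypothesis of Theorem \ref{noncommutative Popa's theorem}.

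With these two observations in hand, the proof reduces to verifying that the remaining hypotheses of the theorem transfer verbatim to the corollary's setting. Indeed, the invertibility of $\phi(1)$, the multiplicativity relations $\phi(a^{*}a)=\phi(a^{*})\phi(1)^{-1}\phi(a)$ and $\phi(aa^{*})=\phi(a)\phi(1)^{-1}\phi(a^{*})$, and the four norm-convergence conditions $\phi_{n}(1)\to\phi(1)$, $\phi_{n}(a)\to\phi(a)$, $\phi_{n}(a^{*}a)\to\phi(a^{*}a)$, $\phi_{n}(aa^{*})\to\phi(aa^{*})$ are assumed verbatim in both statements. Thus, invoking Theorem \ref{noncommutative Popa's theorem} with the CP-maps $\phi_{n}$ regarded as Schwarz maps and $\phi$ regarded as a $2$-positive map yields the conclusion $\phi_{n}(x)\to\phi(x)$ in norm for every $x\in C^{*}(a)$.

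There is no substantive obstacle here; the corollary is essentially a repackaging of the theorem under the stronger assumption of complete positivity. The only point deserving mention is the Schwarz inequality for CP-maps, which is the classical Kadison--Schwarz inequality and requires no additional argument. Consequently the entire proof can be written in one or two lines, simply citing Theorem \ref{noncommutative Popa's theorem} and the aforementioned inclusions between the classes of CP, $2$-positive, and Schwarz maps.
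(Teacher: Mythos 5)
Your proposal coincides with the paper's own derivation: the paper introduces the corollary with the single sentence ``Since CP-maps are both Schwarz and 2-positive, we obtain the following corollary to Theorem~\ref{noncommutative Popa's theorem},'' which is exactly your two observations. One caveat, which applies equally to the paper: with the definition of Schwarz map used here, namely $\phi(x^*x)\geq\phi(x)^*\phi(x)$, a completely positive map is a Schwarz map only when it is contractive (equivalently $\|\phi(1)\|\leq 1$); in general one only has $\phi(x)^*\phi(x)\leq\|\phi(1)\|\,\phi(x^*x)$, and the CP-map $x\mapsto 2x$ on $\mathbb{C}$ already fails the Schwarz inequality. The Kadison--Schwarz inequality you cite is the unital (or contractive) case. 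Since the corollary does not assume the $\phi_n$ are unital or contractive, the one-line reduction to Theorem~\ref{noncommutative Popa's theorem} strictly needs a supplementary normalization step (e.g.\ noting that $\|\phi_n(1)\|$ is eventually bounded because $\phi_n(1)\to\phi(1)$, and rescaling, or observing that the conjugated maps $\psi_n=\phi(1)^{-1/2}\phi_n(\cdot)\phi(1)^{-1/2}$ used inside the theorem's proof are themselves CP and can be handled directly). This is a gap you share with the paper rather than one you introduced, but it is worth recording.
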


\begin{remark}
Corollary \ref{noncommutative theorem} is a noncommutative version of Popa's theorem \cite[Theorem 1]{Pop22}, as the condition $\phi(a^*a)=\phi(a^*)\phi(1)^{-1}\phi(a)$ in Corollary \ref{noncommutative theorem} is equivalent to the condition $V(\textbf{1})V(x^2)=V(x^2)$ in  \cite[Theorem 1]{Pop22} for the commutative case, with $V=\phi$. 
\end{remark}
By removing the unitality assumption on the  CP-maps 
$\phi$ and $\phi_{n}$ in the definition of a noncommutative BKW-operator and thereby defining a non-unital noncommutative BKW-operator, then Corollary \ref{noncommutative theorem} implies that any map $\phi$ satisfying the conditions of Corollary \ref{noncommutative theorem} qualifies as a non-unital noncommutative BKW-operator for the operator system generated by $\{1,a,aa^*, a^*a\}$.

We present a class of maps that satisfy the conditions of the Theorem \ref{noncommutative Popa's theorem}.

\begin{example}
Let $H$ be a Hilbert space and $T$  be an invertible operator on $H$. Let $\lambda > 0$ be a real number. Define  $\phi: C^*(T)\rightarrow B(H)$ be the map $$\phi(x)=\lambda T^*xT.$$ Then $\phi$ is a CP map, and thus $2$-positive. Then $$\phi(I)=\lambda T^*T,\text{ which is invertible}.$$
Note that if $T$ is unitary and  $\lambda=1$  then 
$\phi$ is a representation; otherwise, it need not be multiplicative.
Also,  $\phi(T^*)=\lambda T^*T^*T=\lambda (T^*)^2T$ and $\phi(T)=\lambda T^*TT= \lambda T^*T^2$. Now,
\begin{align*}
    \phi(T^*T)&= \lambda T^*(T^*T)T = \lambda T^* (T^*T^2) 
    =  T^*\phi(T) \\
              &=  T^* \phi(I) \phi(I)^{-1}\phi(T)
              =  T^* (\lambda T^*T)\phi(I)^{-1}\phi(T) \\
              &=  \phi(T^*)\phi(I)^{-1}\phi(T)\\
    \phi(TT^*)&= \lambda T^*TT^*T 
              = \lambda  T^*T(TT^{-1})((T^*)^{-1}T^*)T^*T \\ 
              & = \lambda T^*T^2 (T^{-1}(T^*)^{-1})(T^*)^2T 
              =  \lambda T^*T^2 (\frac{1}{\lambda} (T^*T)^{-1}) \lambda (T^*)^2T \\
              &= \phi(T)\phi(I)^{-1}\phi(T^*)
\end{align*}
That is $\phi$ satisfies the conditions of Theorem \ref{noncommutative Popa's theorem}.
\end{example} 

\section{Concluding Remarks and Future Directions} 
In this paper, we have initiated a study of Noncommutative BKW-operators in the setting of $C^*$-algebras and CP-maps. We have identified a connection between noncommutative BKW-operators with uniqueness in  the Arveson extension theorem for CP-maps.
Also, the existence of noncommutative BKW-operator  for any given operator system is addressed. Additionally, we have discussed several examples and explored different notions of noncommutative BKW-operators and their interconnections. Finally, we established a noncommutative analogue of a recently proposed Korovkin-type theorem. 

This work highlights a number of natural questions that remain open for future investigation. In particular, extending the main results of this article to the nonseparable setting requires further study. Furthermore, the uniqueness aspect of the Arveson extension theorem has not yet been addressed either in the existing literature or in the present work.

Potential directions for future research include:
\begin{enumerate}
    \item A natural direction is to examine to what extent Theorem \ref{BKW-operators and unique CP-extensions} and subsequent results can be extended beyond separable Hilbert spaces to the more general non-separable framework.
    \item A promising direction for future research is a deeper investigation of the uniqueness aspect of the Arveson extension  theorem, which to date has received only limited attention in the literature and in the present article.
    \item Exploring further connections between noncommutative BKW-operators and hyperrigidity with respect to UCP-maps. 
    \item Given a $C^*$-algebra $A$, a Hilbert space $H$ and a  CP-map $ \eta :A\rightarrow B(H) $, find conditions under which there exists a nontrivial  operator system $S$ which is hyperrigid for $\eta$ in $A$. In this case, try to determine some or all of them.   
    \item Given a  $C^*$-algebra $A$, a Hilbert space $H$ and a generating operator system $S$ of $A$, try to determine some or all of the 
    CP-maps $\eta :A\rightarrow B(H)$  for which $S$ is a hyperrigid set for $\eta$. 
\end{enumerate}


{\bf Acknowledgments} The authors would like to thank Prof. B.V. Rajarama Bhat, ISI Bangalore, for the valuable discussions related to Section 3. 

\textbf{Funding :} No funding was received for this research.

\textbf{Data Availability:}
This paper has no associated data. 

\textbf{ Declarations}\\
\textbf{Conflicts of interests}
The authors have no relevant financial or non-financial interests to disclose.

\bibliographystyle{amsplain}

\begin{thebibliography}{99}

\bibitem{AC94} F. Altomare and M. Campiti, \textit{Korovkin type approximation theory and its applications}, de Gruyter Studies in Mathematics, Berlin, New York, 1994. 

\bibitem{Alt10} F. Altomare, \textit{Korovkin-type theorems and approximation by positive linear operators}, Surveys in Approximation Theory \textbf{5} (2010) 92–164.

\bibitem{AP25}C.S. Arunkumar and D. Popa, \textit{A note on operator version of  Korovkin theorem}, Results Math \textbf{80}, 175 (2025). https://doi.org/10.1007/s00025-025-02493-4

\bibitem{Arv69} W. Arveson, \textit{Subalgebras of $C\sp{\ast} $-algebras}, Acta Math. \textbf{123} (1969), 141-224. 

\bibitem{Arv72} W. Arveson, \textit{Subalgebras of $C\sp{\ast} $-algebras. II}. Acta Math. \textbf{128} (1972), no. 3-4, 271--308.


\bibitem{Arv08} W. Arveson, \textit{The noncommutative Choquet boundary}, J. Amer. Math. Soc. \textbf{21} (2008), no. 4, 1065--1084.

\bibitem{Arv11} W. Arveson, \textit{The noncommutative Choquet boundary II: Hyperrigidity}, Israel J. Math.\textbf{184} (2011), 349-385.



\bibitem{BL75} H. Berens and G.G. Lorentz, \textit{Geometric theory of Korovkin sets}, J. Approximation Theory \textbf{15} (1975), no. 3, 161-189.   

\bibitem{Boh52} H. Bohman, \textit{On approximation of continuous and analytic functions}, Ark. Math., \textbf{2} (1952-54), 43–46.

\bibitem{DK15} K. R. Davidson and M. Kennedy, \textit{The Choquet boundary of an operator system}, Duke Math. J. \textbf{164} (2015), no. 15, 2989–3004.

\bibitem{FFHMS19}P. Ferrari, I. Furci, S. Hon, M. A. Mursaleen, S. Serra-Capizzano. \textit{The eigenvalue distribution of special 2-by-2 block matrix-sequences with applications to the case of symmetrized Toeplitz
structures}. SIAM J. Matrix Anal. Appl. \textbf{40} (2019), no. 3, 1066--1086.




\bibitem{ITW96a} K. Izuchi, H. Takagi and S. Watanabe, \textit{Sequential Korovkin type theorems and weighted composition operators}, Acta Sci. Math. (Szeged) \textbf{62} (1996), 161–174.

\bibitem{ITW96b} K. Izuchi, H. Takagi and S. Watanabe, \textit{Sequential BKW-operators and function algebras}, J. Approx. Theory, \textbf{85} (1996), no. 2, 185–200.


\bibitem{IT97} K. Izuchi and S.-E. Takahasi, \textit{BKW-operators on the interval [0,1]}, Rend. Circ. Mat. Palermo, Serie II, Tomo XLVI (1997), 477–489. 

\bibitem{II99} T. Ishii and K. Izuchi, \textit{BKW-operators for Chebyshev systems}, Tokyo J. Math., \textbf{22} (1999), no. 2, 375–389. 


 \bibitem{Kor60} P. P. Korovkin, \textit{Linear operators and approximation theory}, Hindustan publishing corp., Delhi, 1960. 

 \bibitem{KNS13} K. Kumar, M. N. N. Namboodiri, S. Serra-Capizzano, \textit{Preconditioners and Korovkin-type
theorems for infinite-dimensional bounded linear operators via completely positive maps}. Studia
Math. \textbf{218} (2013), no. 2, 95--118.

 \bibitem{LN84} B.V. Limaye  and M.N.N. Namboodiri, \textit{A generalized noncommutative Korovkin theorem and *-closedness of certain sets of convergence}, Illinois J. Math., \textbf{28}(2) (1984) 267–280.

 \bibitem{LN84b} B.V.Limaye and M.N.N.Namboodiri, \textit{Weak Korovkin approximation by completely positive linear maps on $B(H)$},
 J. Approx. Theory, \textbf{42}(3):201-211, 1984.

 \bibitem{MNN11}M.N.N. Namboodiri, \textit{Developments in Noncommutative Korovkin-type
 theorems, RIMS, Kyoto University}, \textbf{1737} (2011), 91–104. 

 \bibitem{MNN12} M.N.N. Namboodiri, \text{Geometric theory of weak Korovkin sets}, Oper.
 Matrices \textbf{6} (2012), no. 2, 271–278 

 \bibitem{NPSV18} M.N.N. Namboodiri , S. Pramod , P. Shankar and A.K. Vijayarajan, \textit{Quasi hyperrigidity and weak peak points for non-commutative operator systems}, Proc. Indian Acad. Sci. Math. Sci. \textbf{128} (2018), no. 5, Paper No. 66, 14 pp.


\bibitem{Pau02} V. Paulsen, \textit{Completely bounded maps and operator algebras}, Cambridge Studies in Advanced Mathematics, Vol. 78, Cambridge University Press, Cambridge, 2002.


\bibitem{Pop22}D. Popa, \textit{An operator version of the Korovkin theorem}, J. Math. Anal. Appl. \textbf{515} (1) (2022).



\bibitem{Ser99}S. Serra Capizzano, \textit{A Korovkin-based approximation of multilevel Toeplitz matrices (with
rectangular unstructured blocks) via multilevel trigonometric matrix spaces}. SIAM J. Numer. Anal.
\textbf{36} (1999), no. 6, 1831--1857.

\bibitem{Tak90} S.-E. Takahasi, \textit{Bohman-Korovkin-Wulbert operators on $C[0,1]$ for $\{1,x,x^2,x^3,x^4\}$}, Nihonkai Math J. \textbf{1} (1990), 155-159.
 
\bibitem{Tak93} S.-E. Takahasi, \textit{Bohman-Korovkin-Wulbert operators on normed spaces}, J. Approx.
Theory \textbf{72} (1993), 174-184.

\bibitem{Tak95} S.-E. Takahasi, \textit{(T,E)-Korovkin closures in normed spaces and BKW-operators}, J. Approx. Theory, \textbf{82} (1995), 340–351. 

\bibitem{Tak96}  S.-E. Takahasi, \textit{Bohman-Korovkin-Wulbert operators from a function space into a commutative $C^*$-algebra for special test  functions}, Tohoku Math. J., (2) \textbf{48} (1996), no. 1, 139–148.







\bibitem{XY16} X. Sun and Y. Li, \textit{Extension properties of some completely positive maps}, Linear Multilinear Algebra 65 (2017), no. 7, 1374–1385.



\bibitem{Wul68} D. E. Wulbert, \textit{Convergence of operators and Korovkin’s theorem}, J. Approximation Theory, \textbf{1} (1968), 381–390.

\end{thebibliography}

\end{document}